\documentclass[11pt, a4paper, oneside]{article}
\usepackage{tikz}
\usepackage{amsmath}
\usepackage{amsthm}
\usepackage{amssymb}
\usepackage{hyperref,soul,color}
\usepackage{graphicx}
\usepackage{comment}
\usepackage{colortbl}
\usepackage{placeins}
\usepackage{xcolor,soul}
\usepackage{mathtools}

\usepackage{pgfplots}

\newcommand{\cG}{\mathcal{G}}
\newcommand{\cI}{\mathcal{I}}
\newcommand{\cS}{\mathcal{S}}
\newcommand{\cW}{\mathcal{W}}

\newcommand{\de}{\mathrm{d}}

\newcommand{\abs}[1]{\lvert#1\rvert}
 
\pgfplotsset{compat=newest}

\usepackage{graphicx} 

\newtheorem{theorem}{Theorem}[section]
\newtheorem{proposition}[theorem]{Proposition}

\theoremstyle{remark}
\newtheorem{remark}[theorem]{Remark}
\theoremstyle{definition}

\title{
Equilibrium and Interaction Regimes in Mixed Disclination-Dislocation Systems
}

\author{Pierluigi Cesana\footnote{Institute of Mathematics for Industry, Kyushu University, Japan, \url{cesana@math.kyushu-u.ac.jp}
},  
 Marco Morandotti\footnote{
Dipartimento di Scienze Matematiche ``G.~L.~Lagrange'', Politecnico di Torino, Italy, \url{marco.morandotti@polito.it}
},
Aldo Sambo
\footnote{
Dipartimento di Matematica, Informatica e Geoscienze, University of Trieste, Italy, \url{s322081@ds.units.it}
}}

\begin{document}

\maketitle

\begin{abstract}

We investigate equilibrium configurations and dissipative dynamics of simplified disclination--dislocation systems in planar elasticity. We consider an edge dislocation interacting with a fixed wedge disclination in a circular domain. We derive the reduced interaction energy, characterize the equilibrium states and their stability, and study the associated dissipative evolution. The analysis reveals characteristic length scales and collision times arising from the competition between the dislocation and disclination contributions. In particular, the presence of the disclination produces a nontrivial modification of the dislocation dynamics and can hinder its motion. This behavior is consistent with experimental observations and provides a simple mathematical description of the interaction between rotational and translational defects.

\end{abstract}

\vskip5pt
\noindent
\textsc{Keywords}: Wedge Disclinations, Edge Dislocations, Linearized Elasticity, Dynamics of Defects, Interaction Length Scales.
\vskip5pt
\noindent
\textsc{2020 AMS subject classification:}  
49J45,   
49J10,   
74B15,  
74H60,
74H80.

   \tableofcontents

\section{Introduction}

 Defects, lattice asymmetries, vacancies, and other microstructural irregularities play a fundamental role in determining the mechanical behavior of crystalline solids. Among the most important lattice incompatibilities are \emph{dislocations} and \emph{disclinations}, which describe two distinct types of kinematic incompatibilities in the crystal structure. Dislocations are translational defects characterized by the failure of closure of the translational component of the displacement field. They are described by their \emph{Burgers vector}, which measures the lattice mismatch at the microscopic scale. Disclinations, on the other hand, are rotational incompatibilities characterized by the failure of closure of the rotational component of the displacement field. They are described by their \emph{Frank angle}, which quantifies the rotational mismatch induced in the crystal lattice. Although dislocations and disclinations have been extensively studied individually, their mutual interaction remains much less understood, despite its importance in many physical phenomena and engineering applications.

Since the pioneering work of Volterra~\cite{V07}, dislocations and disclinations have been studied within continuum elasticity; see, for example, \cite{W68-BIS,N67,RV1983}. Dislocations were identified as a fundamental mechanism underlying plastic deformation in the seminal works \cite{Orowan1934,Polanyi1934,Taylor1934} and have subsequently attracted substantial mathematical interest, in particular because of their close connection with variational models for Ginzburg--Landau vortices~\cite{BBH1994}.

The interaction between dislocations and disclinations is supported by a broad range of experimental observations. In crystal plasticity, their coupling is closely related to the formation and evolution of kinks and kink bands \cite{HAGIHARA10,I19,IS20,LN15}. Mobile partial or effective disclinations have also been proposed as carriers of rotational plasticity during severe deformation, including nanograin fragmentation in mechanically milled Fe and kink-band evolution in long-period stacking ordered Mg alloys \cite{murayama02,TOKUZUMI2023118785,TOKUZUMI2020100716}. In shape-memory alloys, experiments on $\beta$-Ti martensites reveal disclination-type incompatibilities localized at martensitic variant junctions \cite{IHM13,ILTH17}. Related structures arise in graphene, where combinations of disclinations generate dipoles, quadrupoles, Stone--Wales defects, flower defects, and edge dislocations represented as disclination dipoles \cite{Banhart11}. Their role in determining elastic fields, energetics, grain boundaries, and other crystalline structures has been investigated in \cite{met10111517,klem}.

Several mathematical approaches have been developed to describe systems involving both dislocations and disclinations. A recent formulation based on Lie algebra techniques derives coupled governing equations for translational and rotational incompatibilities within a unified geometric setting \cite{DU2025176}. Equilibrium configurations involving disclination dipoles and edge dislocations in layered elastic materials have been studied in \cite{Colin2025Disclination}. Although that work is also formulated within linear elasticity, it concerns the interaction between a dislocation and a \emph{disclination dipole}; as will become clear below, the interaction of an isolated wedge disclination with an edge dislocation leads to a different mathematical problem. Another important framework is the generalized disclination theory introduced in \cite{acharya15}, which treats Volterra-type incompatibilities within a continuum theory of distributed fields and has been applied to disclinations, dislocations, grain boundaries, and their interactions \cite{ZHANG18}.
Related stochastic approaches to the evolution of interfaces in self-similar martensitic microstructures, where lattice incompatibilities play a central role, can be found in \cite{BCH15,CH20}.

The dynamics of dislocations has also been studied extensively, especially under dissipative evolution laws; see \cite{Acharya10,ACHARYA20171,tng}. The configurational forces governing their motion, including the Peach--Koehler force derived from the Eshelby stress tensor, were analyzed in \cite{CermelliLeoni06}; see also \cite{CermelliGurtin99} and \cite{BlassFonsecaLeoniMorandotti15,BlassMorandotti17,HudsonMorandotti2017} for related results in the case of screw dislocations. We also recall the continuum description based on continuously distributed dislocations developed in \cite{ACHARYA01}. By contrast, the dissipative dynamics of disclinations is much less developed. A first quantitative study for finite systems of wedge disclinations was carried out in \cite{CGMP2025}, where characteristic collision times, interactions between disclinations of opposite charges, and interactions with the boundary were analyzed. Mesoscale reaction--diffusion models describing the nucleation and coupled evolution of dislocations and disclinations were proposed in \cite{ROMANOV1993707,ROMANOV19941581}. These models include empirical terms accounting for nucleation, motion, and mutual interaction, but do not address the interaction between an isolated Volterra disclination and an edge dislocation through a variational description.

A basic difficulty in constructing such a model is the different energetic character of the two incompatibilities. Within planar linear elasticity, wedge disclinations have finite, although very large, elastic energy scaling with the square of the size of the body; see, e.g., \cite{CDLM24,SN88}. Edge dislocations, instead, have logarithmically divergent self-energy; see, e.g., \cite{HirthLothe82,N67}. A systematic variational treatment of isolated dislocations and disclinations has recently been developed in \cite{CDLM24,CFM2025}, where both arise from linearized elasticity with prescribed translational and rotational incompatibilities and the corresponding length scales are clarified. The different energy scalings show, however, that a direct superposition of the two elastic fields does not by itself lead to a meaningful interaction regime.

Various regularization procedures have been proposed to handle the singular character of these fields. These include core-radius regularizations for dislocations and disclinations \cite{CermelliLeoni06,CDLM24,CFM2025}, as well as approaches based on first-gradient elasticity \cite{LAZAR05} and second strain-gradient elasticity \cite{DENG20073646, lazar06}. In the present work we follow instead the classical construction of Eshelby~\cite{Eshelby66}, in which an edge dislocation is represented as the limit of a dipole of wedge disclinations. More precisely, the distortion generated by an edge dislocation is approximated by two disclinations of opposite Frank angles separated by a distance $h$, with the Frank angles, separation distance, and Burgers vector linked through the classical dipole relation. This allows the two contributions to be treated within a common variational framework and provides a consistent asymptotic description of their interaction.

Here we apply this construction to a simplified planar configuration that isolates the main mechanisms of disclination--dislocation interaction. We consider a circular domain containing a wedge disclination fixed at its center, while an edge dislocation is constrained to move radially and its Burgers vector is free to rotate; see Figure~\ref{figure1}. The reduced problem therefore has two degrees of freedom, one spatial and one angular. Despite this low-dimensional structure, the resulting energy landscape exhibits multiple equilibrium states and nontrivial stability and dynamical behavior.

Our analysis identifies a regime, determined by a dimensionless parameter $\alpha$ measuring the relative strength of the disclination and dislocation contributions, in which stable equilibria occur in the interior of the domain. The interaction also generates characteristic length scales arising from the competition between the corresponding elastic and configurational contributions.

We then study the associated dissipative dynamics and identify two qualitatively different behaviors. Some trajectories lead to collision of the edge dislocation with the boundary in finite time, as occurs for isolated edge dislocations, whereas trajectories attracted to stable interior equilibria approach them only asymptotically as time tends to infinity. In particular, the presence of the disclination can drastically modify the dynamics of the dislocation: its elastic stress field can significantly slow down, and eventually arrest, the radial motion of the dislocation at a stable interior equilibrium.

This last result provides a direct connection with experimental observations. In deformation experiments on Mg-based alloys, dislocation motion has been observed to be hindered by the stress fields generated by fixed disclinations \cite{TOKUZUMI2023118785}. Remarkably, this experimentally observed behavior emerges naturally from the present variational model: the interaction with a single fixed disclination is sufficient to slow down and, in the appropriate parameter regime, arrest the motion of an edge dislocation.

\begin{figure}[ht]
    \centering
\begin{tikzpicture}
    \draw[->] (4.5, 0) -- (11.5, 0) node[right] {$x_1$};
    \draw[->] (8, -3.5) -- (8, 3.5) node[above] {$x_2$};
    \draw (8,0) circle (3);
    \node at (6,2) [below right] {$B_1$};
    \fill (9.5, 0.5) circle (1.5pt) node[above left] {$(\xi_1,s_1)$};
    \fill (10, 0) circle (1pt) node[below left] {$d$};
    \fill (10.5, -0.5) circle (1.5pt) node[below right] {$(\xi_2,s_2)$};
    \fill (8, 0) circle (1.5pt) node[below left] {$(\xi_3,s_3)$};
     \draw[->] (10.5, -0.5) -- (9.5, 0.5);
    \draw[->] (10.4, 0) arc (0:135:0.4);
      \node at (10.5, 0.25) [above] {$\varphi$};
 \end{tikzpicture}
\caption{A dipole of disclinations $(\xi_1,s_1)$-$(\xi_2,s_2)$, centered at $d=(z,0)$ and with dipole length $h>0$, and an isolated disclination $(\xi_3,s_3)$. The dipole director~$w$ points from~$\xi_2$ to~$\xi_1$ and is oriented with an angle~$\varphi$ with respect to the positive $x_1$-axis. 
The isolated disclination $(\xi_3,s_3)$ is hinged at the center of the disk ($\xi_3=(0,0)$). 
The dipole can rotate and its center slide along the $x_1$-axis; the isolated disclination cannot move from the center.}
\label{figure1}
\end{figure}

\section{Mechanical model}\label{sec_model}
To model the presence of $K\in\mathbb{N}$ isolated discinations in the unit disk $B_1$ of $\mathbb{R}^2$, we consider the concentrated measure $\theta\in H^{-2}(B_1)$   given by the superposition of Dirac deltas 
\begin{equation}\label{atomic_measure}
\theta=\sum_{k=1}^K s_k\delta_{\xi_k}\,, \qquad\xi_k\in B_1\,,\qquad \xi_k\neq\xi_j \quad\text{if} \quad k\neq j,
\end{equation}
where $s_k$'s are the associated Frank angles.
The corresponding functional $\cI^\theta\colon H^2_0(B_1)\to\mathbb{R}$ is given by the sum of the mechanical energy $\cW \colon H^2_0(B_1)\to[0,+\infty)$ and of the action of the measure $\theta$, and reads, in the formulation for the Airy potential~$v$, see \cite[formula~(0.2)]{CDLM24}, 
\begin{equation}\label{eq_en_B_1}
     \mathcal{I}^{\theta}(v)= \cW(v)+\langle \theta, v\rangle =\frac{1}{2}\frac{1+\nu}{E}\int_{B_1} \big[|\nabla^2 v(x)|^2-\nu(\Delta v(x))^2 \big]\,\de x+\langle\theta,v \rangle.
\end{equation}
We use the notation $H^2_0(B_1)$ and $H^{-2}(B_1)$ to denote the usual Sobolev space of functions that are square integrable together with their first- and second-order partial derivatives and such that their trace and that of their normal derivative on $\partial B_1$ vanishes, and its dual space, respectively.

When treating an arrangement of disclinations that includes dipoles $\xi^{h}_{\pm}=d\pm\frac{h}2w_\varphi$ centered in $d\in B_1$\, of length $h>0$, and oriented in the direction $w_\varphi=(\cos\varphi,\sin\varphi)$, we trace the dependence on $h$ by denoting by $\theta^h$ the corresponding measure, which will contain a contribution $s(\delta_{\xi^{h}_{+}}-\delta_{\xi^{h}_{-}})$.
More precisely, if there is just one dipole in the system, we have $\theta^h=s\big(\delta_{d+\frac{h}2w_\varphi}-\delta_{d-\frac{h}2w_\varphi}\big)$, that is, by a slight abuse of notation,
\begin{equation}
\theta^h(x)=s\bigg(\delta\Big(x-d-\frac{h}2w_\varphi\Big)-\delta\Big(x-d+\frac{h}2w_\varphi\Big)\bigg),
\end{equation}
where $\pm s$ are the Frank angles of the disclinations at $\xi^{h}_{\pm}$\,.

By linearly rescaling the Airy potential with respect to the dipole length $h$, that is 
\begin{equation}\label{2509140958}
v^{h}=hv
\end{equation}
for a certain function $v\in H^2_0(B_1)$, we obtain that 
\begin{equation}\label{202509121117_bis_1}
\cI^{\theta^h}(hv)= h^2\cI^{\theta^h/h}(v).
\end{equation}
This is motivated by \cite[Proposition~3.5]{CDLM24}, where it was shown that, as $h\to0$, a dipole of disclinations is energetically equivalent to an edge dislocation.

The Euler--Lagrange equation associated with the functional in the right-hand side of \eqref{202509121117_bis_1} is
\begin{equation}\label{2509121502}
\begin{cases}
\displaystyle\frac{1-\nu^2}{E} \Delta^2 v = -\frac{\theta^{h}}{h} & \text{in $B_1$\,,}\\[2mm]
v = \partial_n v = 0 & \text{on } \partial B_1\,.
\end{cases}
\end{equation}
If the measure $\theta^{h}$ is atomic as in \eqref{atomic_measure}, then the unique solution $\underline{v}\in H^2_0(B_1)$ to \eqref{2509121502} can be obtained via superposition and convolution with the Green's function (see formula \eqref{2509121514_app} in Appendix~\ref{ssec_min_en} with $R=1$)
\begin{equation}\label{2509121514}
\underline{v}(x)= -\bigg(\frac{\theta_{h}}{h}* G_{\xi}\bigg)(x)=\frac{C}{h}
\sum_{k=1}^K s_k\, \overline{G}_{\xi^h_{k}}(x),
\end{equation}
where $C=\frac{E}{1-\nu^2}\frac{1}{16\pi}$ (see \eqref{2510290905} below). 
For a derivation of the explicit expression of $\underline{v}$, of the Green's function $G_\xi$\,, and of the functions $\overline{G}_{\xi^h_{k}}$\,, we refer the reader to Sections~\ref{ssec_Green} and~\ref{ssec_min_en} in the Appendix, where the general derivation is carried out in the disk $B_R$\,.
Thanks to Clapeyron's Theorem (see, \emph{e.g.}, \cite[Theorem 2.2]{CGMP2025}), we can now compute
the mechanical energy at equilibrium (compare with \eqref{2510291059_app} below with $R=1$) 
\begin{equation}\label{2510291059}
\begin{split}
\underline{W}^{h} \coloneqq \cW(\underline{v})  = & -\frac{1}{2} \biggl\langle -\frac{\theta_{h}}{h}, \underline{v} \biggr\rangle 
= \frac{C}{2}
\cS^{h}\circ \overline{\cG}^{h}\,,
\quad\quad\text{where}\quad
\cS^{h}_{kj}=\frac{s_k s_j}{h^2},\; \overline{\cG}^{h}_{kj}=\overline{G}_{\xi^h_{j}}(\xi^h_{k}),
\end{split} 
\end{equation}  
and $\circ$ denotes the Hadamard (\emph{i.e.}, element-wise) product of matrices.
Observe that the matrix $\overline{\cG}$ is symmetric owing to the symmetry of the function $\overline{G}$, \emph{i.e.}, $\overline{G}_{\xi}(x)=\overline{G}_{x}(\xi)$\,, see \eqref{2510290857}.

\smallskip

From now on, we will consider  a set of $K=3$ disclinations, $\xi^h_{1}\,,\xi^h_{2}\,,\xi_3\in B_1$\,,
so that the corresponding measure is $\theta^h= s_1\delta_{\xi^h_{1}}+s_2\delta_{\xi^h_{2}}+s_3\delta_{\xi_3}$\,.
Here, the pair $\xi^h_{1}$-$\xi^h_{2}$ is a dipole as $\xi^h_{\pm}$ described above, more precisely
\begin{equation}\label{2confprimi}
\xi^h_{1} = (z,0)+\frac{h}{2}( \cos \varphi, \sin \varphi ),
\qquad
\xi^h_{2} = (z,0) -\frac{h}{2}( \cos \phi, \sin \phi ) ,
\qquad 
\xi_3 = ( 0, 0 )
\end{equation}
for $z\in (\frac{h}2,1)$, and our choice for the Frank angles is
\begin{equation}\label{Frank_angles}
s_1=-s_2\eqqcolon s>0\,, \qquad\text{and}\qquad s_3=h\sigma>0\,.
\end{equation}
\begin{remark}
A key ingredient of our analysis is the identification of the appropriate scaling regime governing the defects interaction. 
In view of considering the (rescaled) measure $\theta^h/h$, it is reasonable to assume that the Frank angle of the isolated wedge disclination scales as $h^\gamma$, for some exponent $\gamma>0$, which crucially determines the behavior of the coupled system in the limit as $h\to0$.
When $\gamma=1$, as in our choice~\eqref{Frank_angles}, the energetic contributions of the limit dislocation and the disclination remain of the same order, producing a genuine interaction regime. For $0<\gamma<1$, the disclination dominates the asymptotic behavior, whereas for $\gamma>1$ the influence of the isolated disclination disappears in the limit and the system reduces to that of an isolated dislocation. This critical scaling coincides with the variational regime identified in~\cite{CDLM24}, where the equivalence between an edge dislocation and a dipole of wedge disclinations was established through a suitable renormalization procedure.
\end{remark}

Owing to~\eqref{Frank_angles}, the matrices $\cS^{h}$ and $\overline{\cG}^{h}$ in~\eqref{2510291059} read
\begin{equation}\label{SG}
\cS^{h} = 
\begin{pmatrix}
\quad s^2/h^2 & -s^2/h^2 & \quad s\sigma/h \\
-s^2/h^2 & \quad s^2/h^2 & -s \sigma/h \\
\quad\!\!\! s \sigma/h & -s \sigma/h & \quad \sigma^2
\end{pmatrix}
\quad \text{and}\quad
\overline{\cG}^{h} =
\begin{pmatrix}
\overline{G}_{\xi^h_{1}}(\xi^h_{1}) & \overline{G}_{\xi^h_{2}}(\xi^h_{1}) & \overline{G}_{\xi_{3}}(\xi^h_{1}) \\
\overline{G}_{\xi^h_{1}}(\xi^h_{2}) & \overline{G}_{\xi^h_{2}}(\xi^h_{2}) & \overline{G}_{\xi_{3}}(\xi^h_{2}) \\
\overline{G}_{\xi^h_{1}}(\xi_{3}) & \overline{G}_{\xi^h_{2}}(\xi_{3}) & \overline{G}_{\xi_{3}}(\xi_{3})
\end{pmatrix},
\end{equation}
and in the expression of the mechanical energy at equilibrium~$\underline{W}^h$ in~\eqref{2510291059} we can identify the sum of three different contributions 
\begin{equation}\label{2511091901}
\underline{W}^h(\varphi,z;s,\sigma)=\underline{W}_{\text{D}}^h(\varphi,z;s)+\underline{W}_{\text{int}}^h(\varphi,z;s,\sigma)+\underline{W}_{\text{d}}(\sigma),
\end{equation}
where
\begin{eqnarray}
&&\!\!\!\! \underline{W}^h_{\text{D}}(\varphi,z;s)= \frac{C}{2h^2} \sum_{k,j=1}^2 s_k s_j \,\overline{G}_{\xi^h_{j}}(\xi^h_{k}) = \underline{W}_{\text{D,eff}}^h(\varphi,z;s)+\underline{W}_{\text{D,div}}^h(s)\label{W1h_2conf}\\
&\!\!\!\! \coloneqq &\!\!\!\! Cs^2\biggl(2z^2\cos^2\varphi+\log\biggl((1-z^2)^2+\frac{h^2}{16}(8+h^2-8z^2\cos(2\varphi))\biggr)\!\! \biggr)-2Cs^2\log h, \nonumber
\end{eqnarray}
\begin{eqnarray}
&&\!\!\!\! \underline{W}_{\text{int}}^h(\varphi,z;s,\sigma) = \frac{C\sigma}{h} \sum_{k=1}^2 s_k\,\overline{G}_{\xi_{3}}(\xi^h_{k}) \label{223223}\\
&\!\!\!\!=&\!\!\!\!  \frac{Cs\sigma}{2h} \bigg[(h^2{+}4z^2)\tanh^{-1}\!\bigg(\frac{4hz\cos\varphi}{h^2+4z^2}\bigg)
{+}2hz\cos\varphi\bigg(\!\log\bigg(z^4{+}\frac{h^2}{16}\big(h^2-8 z^2 \cos(2\varphi)\big)\!\!\bigg)-2\bigg)\bigg], \nonumber
\end{eqnarray}
and $\underline{W}_{\text{d}}(\sigma)= \frac12 C\sigma^2\, \overline{G}_{\xi_{3}}(\xi_{3})=C\sigma^2/2$.
The three summands in~\eqref{2511091901} represent: the contribution to the energy of the disclinations in the dipole, the interaction of the disclinations in the dipole with the isolated disclination, and the contribution to the energy of the isoleted disclination, respectively. 
Note that we made explicit use of the symmetry of $\cS^{h}$ and $\overline{\cG}^{h}$ in the expression for~$\underline{W}_{\text{int}}^h$\,.
Also notice that the orientation $\varphi$ of the dipole impacts both the energy $\underline{W}_{\text{D}}^h$ of the dipole (which now is not symmetric with respect to the origin) and the interaction energy $\underline{W}_{\text{int}}^h$\,.

By the explicit expression in \eqref{W1h_2conf}, we notice that  
\begin{equation}\label{W_1_lim}
\underline{W}_{\text{D,div}}^h(s)=-\frac{E}{1-\nu^2}\frac{s^2}{8\pi}\log h \to +\infty,\qquad\text{as $h\to0^+$,}
\end{equation}
in accordance with \cite[formula~(3.9) and Theorem~4.6]{CDLM24} and \cite[formula~(2.18), with $R=1$ and $a=h$]{SN88} (compare also with \cite[formula (7.17)]{dW3} and \cite[formula (7)]{ZA2018} for the relationship between the Frank angle $s$ of the dipole and the Burgers vector $b$ of the resulting dislocation).
Given the behavior \eqref{W_1_lim}, it is customary (see, \emph{e.g.}, \cite{AlicandroDeLucaGarroniPonsiglione16, BlassFonsecaLeoniMorandotti15, BlassMorandotti17, CermelliLeoni06}) to disregard the diverging term and to work with $\underline{W}^h_{\text{eff}}$\,, defined by
\begin{equation}\label{W_eff_h_2conf}
\underline{W}_{\text{eff}}^h(\varphi,z;s,\sigma)\coloneqq \underline{W}^h(\varphi,z;s,\sigma)-\underline{W}_{\text{D,div}}^h(s),
\end{equation}
which is often referred to as the \emph{renormalized energy}.

We compute the torque and force exerted on the dipole
\begin{equation} 
-\begin{pmatrix}
    t^h(\varphi,z;s,\sigma)\\
    f^h(\varphi,z;s,\sigma)
\end{pmatrix}\coloneqq
\nabla_{(\varphi,z)} \underline{W}_{\text{eff}}^h (\varphi,z;s,\sigma) = 
\begin{pmatrix}
    Cs z\,w^h(\varphi,z;s,\sigma)\sin\varphi \\
    Cs \bigl(F_1^h+F_2^h+F_3^h\bigr)/h
\end{pmatrix},
\end{equation}
where 
\begin{equation*}
\begin{split}
w^h(\varphi,z;s,\sigma) \coloneqq &\, -4 s z \cos\varphi \bigg(\frac{h^4 + 16(1 - z^2)^2 -8h^2 z^2 \cos(2\varphi)}{8 h^2 + h^4 + 16 (1 - z^2)^2 -8h^2 z^2 \cos(2\varphi)}\bigg) \\
&\, -\sigma\log\bigg(\frac{h^4+16z^4 - 8 h^2 z^2 \cos(2\varphi)}{16}\bigg),
\end{split}
\end{equation*}
and 
\begin{eqnarray*}
F_1^h &\coloneqq&  4\sigma z \tanh^{-1}\bigg(\frac{4hz\cos\varphi}{h^2+4z^2}\bigg), \qquad F_3^h\coloneqq \sigma h\cos\varphi\, \log\bigg(\frac{h^4+16z^4 - 8h^2 z^2 \cos(2\varphi)}{16}\bigg)\\
F_2^h &\coloneqq& \frac{2sh}{z}\bigg(1+z^2+z^2\cos(2\varphi)+\frac{16z^4-(4+h^2)^2}{8 h^2 + h^4 + 16 (1 - z^2)^2 -8h^2 z^2 \cos(2\varphi)}\bigg)
\end{eqnarray*}
The following commutativity result holds true.
\begin{proposition}\label{prop_comm_2conf}
Let $\underline{W}_{\mathrm{eff}}^h$ be defined as in~\eqref{W_eff_h_2conf}. 
Then the gradient operation commutes with that of taking the limit as $h\to0$.
By defining $\underline{W}_{\mathrm{eff}}(\varphi,z;s,\sigma) \coloneqq \lim_{h\to0}  \underline{W}_{\mathrm{eff}}^h(\varphi,z;s,\sigma)$, we have that
\begin{equation}\label{commuted_2conf}
\begin{split}
\nabla_{(\varphi,z)}\underline{W}_{\mathrm{eff}}(\varphi,z;s,\sigma)= &\, \lim_{h\to0} \nabla_{(\varphi,z)} \underline{W}_{\mathrm{eff}}^h(\varphi,z;s,\sigma) \\
=&\, 4Cs\begin{pmatrix}
            -z\bigl( sz\cos\varphi+\sigma \log z\bigr) \sin\varphi \\
            sz\bigg(\cos^2\varphi -\frac{1}{1-z^2}\biggr)+\sigma(\log z+1)\cos\varphi
        \end{pmatrix}
\end{split}
\end{equation}
\end{proposition}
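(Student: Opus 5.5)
Since $\underline{W}_{\mathrm{eff}}^h=\underline{W}_{\mathrm{D,eff}}^h+\underline{W}_{\mathrm{int}}^h+\underline{W}_{\mathrm{d}}$ is given in closed form by \eqref{W1h_2conf}--\eqref{223223}, the plan is to compute both sides of \eqref{commuted_2conf} explicitly and check that they agree. The strategy is local uniform convergence in $C^1$: if $\underline{W}_{\mathrm{eff}}^h\to\underline{W}_{\mathrm{eff}}$ pointwise and $\nabla_{(\varphi,z)}\underline{W}_{\mathrm{eff}}^h$ converges uniformly on compact subsets of $\mathbb{R}\times(0,1)$, then the standard theorem on differentiating limits gives $\nabla\underline{W}_{\mathrm{eff}}=\lim_h\nabla\underline{W}_{\mathrm{eff}}^h$. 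All expressions are smooth (indeed real analytic) in $(\varphi,z,h)$ on a neighbourhood of $\{h=0\}$ for $z$ in compacts of $(0,1)$. So this uniform convergence follows once each piece is written as a smooth function of $h$ up to $h=0$.

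\emph{Step 1: the limit energy.} In $\underline{W}_{\mathrm{D,eff}}^h$ the argument of the logarithm tends to $(1-z^2)^2>0$, giving $Cs^2\bigl(2z^2\cos^2\varphi+2\log(1-z^2)\bigr)$. For $\underline{W}_{\mathrm{int}}^h$, the singular prefactor $1/h$ must be removed. Set $u=4hz\cos\varphi/(h^2+4z^2)=O(h)$ and expand $\tanh^{-1}u=u+O(h^3)$. Then $(h^2+4z^2)\tanh^{-1}u=4hz\cos\varphi+O(h^3)$, and the bracket equals $4hz\cos\varphi+2hz\cos\varphi(4\log z-2)+O(h^2)$, which is $8hz\cos\varphi\log z+O(h^2)$. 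Hence $\underline{W}_{\mathrm{int}}^h\to 4Cs\sigma z\cos\varphi\log z$, and
\[
\underline{W}_{\mathrm{eff}}=Cs^2\bigl(2z^2\cos^2\varphi+2\log(1-z^2)\bigr)+4Cs\sigma z\cos\varphi\log z+\tfrac{C\sigma^2}{2}.
\]
Differentiating directly gives $\partial_\varphi\underline{W}_{\mathrm{eff}}=-4Csz(sz\cos\varphi+\sigma\log z)\sin\varphi$. Similarly $\partial_z\underline{W}_{\mathrm{eff}}=4Cs\bigl(sz\cos^2\varphi-\frac{sz}{1-z^2}+\sigma(\log z+1)\cos\varphi\bigr)$, which is the right-hand side of \eqref{commuted_2conf}.

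\emph{Step 2: limit of the gradients.} Here I use the displayed torque and force. In $w^h$ the rational factor tends to $1$ and the logarithm tends to $\log z^4=4\log z$. So $Csz\,w^h\sin\varphi\to Csz(-4sz\cos\varphi-4\sigma\log z)\sin\varphi$, which matches. For the force, expand each of the three terms divided by $h$:
\begin{itemize}
\item $F_1^h/h\to 4\sigma z\cdot\frac{\cos\varphi}{z}=4\sigma\cos\varphi$;
\item $F_3^h/h\to 4\sigma\cos\varphi\log z$;
\item $F_2^h/h\to\frac{2s}{z}\bigl(1+2z^2\cos^2\varphi+\frac{16z^4-16}{16(1-z^2)^2}\bigr)=\frac{2s}{z}\bigl(2z^2\cos^2\varphi+1-\frac{1+z^2}{1-z^2}\bigr)$.
\end{itemize}
Simplifying the last one gives $4sz\cos^2\varphi-\frac{4sz}{1-z^2}$. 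Multiplying the sum of the three limits by $Cs$ reproduces the second component. Every expansion here is of an analytic function of $h$ near $0$ (note $\tanh^{-1}(u)/h$ is analytic since $\tanh^{-1}$ is odd), with coefficients continuous in $(\varphi,z)$. Hence the convergence is uniform on compacts, which closes the argument via Step 1.

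\emph{Main obstacle.} The main difficulty is the $1/h$ singularity in $\underline{W}_{\mathrm{int}}^h$ and in $F_1^h,F_3^h$. One must verify that the $O(1)$ terms cancel, so the limits are finite; this is exactly what the expansion of $\tanh^{-1}$ provides. One must also verify uniformity in $(\varphi,z)$ near $z\to0$ and $z\to1$. I would restrict to compact subsets $z\in[\delta,1-\delta]$, which suffices for the pointwise statement on $(0,1)$.
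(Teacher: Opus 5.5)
Your proposal is correct and follows essentially the same route as the paper. Like the paper, you compute the $h\to0$ limit of $\underline{W}^h_{\mathrm{eff}}$ explicitly (your $\tanh^{-1}$ expansion reproduces \eqref{eq_Weff_2conf}), differentiate it, and check agreement with the limits of $w^h$, $F^h_1$, $F^h_2$, $F^h_3$, all of which you evaluate correctly; your remark that $\underline{W}^h_{\mathrm{eff}}$ extends smoothly across $h=0$ for $z$ in compacts of $(0,1)$, giving locally uniform $C^1$ convergence, just makes explicit the justification the paper compresses into ``continuity in $h$ and differentiability in $(\varphi,z)$''.
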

\begin{proof}
The continuity of $w^h$, $F^h_1$\,, $F^h_2$\,, and $F^h_3$ with respect to $h$ yields that
\begin{equation}\label{eq_Weff_2conf}
\begin{split}
\underline{W}_{\text{eff}}(\varphi,z;s,\sigma) = &\, \lim_{h\to0}  \underline{W}_{\text{eff}}^h(\varphi,z;s,\sigma) \\
= &\, 2Cs^2\bigl( z^2\cos^2\varphi+\log(1-z^2) \bigr)+ 4Cs\sigma z\log z\cos\varphi
+\frac{C}{2}\sigma^2;
\end{split}
\end{equation}
the differentiability of $\underline{W}_{\text{eff}}^h$ with respect to the variables $(\varphi,z)$ then implies the validity of \eqref{commuted_2conf}, as an immediate computation reveals.
\end{proof}
 
Proposition~\ref{prop_comm_2conf} shows, in particular, that the variational structure is preserved in the limiting model. Indeed, the equilibrium equations of the limiting system can be obtained in two equivalent ways: either by taking the limit as $h\to0$ in the equilibrium equations of the $h$-dependent system, or directly as the Euler--Lagrange equations associated with the limiting effective renormalized energy $\underline{W}_{\text{eff}}$\,. Thus, equilibrium configurations of the limiting system can be characterized as critical points of $\underline{W}_{\text{eff}}$\,, with stable equilibria corresponding to its local minima.
A direct consequence of~\eqref{commuted_2conf} is that also the force~$f^h$ and torque~$t^h$ pass to the limit as $h\to0$ and 
\begin{equation}\label{eq_grad_Wh0_2conf}
\begin{pmatrix}
    t(\varphi,z;s,\sigma)\\
    f(\varphi,z;s,\sigma)
\end{pmatrix}=
\lim_{h\to0} \begin{pmatrix}
    t^h(\varphi,z;s,\sigma)\\
    f^h(\varphi,z;s,\sigma)
\end{pmatrix}=-4Cs\begin{pmatrix}
            -z\bigl( sz\cos\varphi+\sigma \log z\bigr) \sin\varphi \\
            sz\bigg(\cos^2\varphi -\frac{1}{1-z^2}\biggr)+\sigma(\log z+1)\cos\varphi
        \end{pmatrix}.
\end{equation}
The equilibria for the effective configuration with 
the isolated disclination at the origin and one edge dislocation at $z\in(0,1)$ are found by equating to zero both expressions in \eqref{eq_grad_Wh0_2conf}.
For future use, we compute there the Hessian matrix of $\underline{W}_{\text{eff}}$
\begin{equation}\label{eq_hessian_W_eff_2conf}
\begin{split}
&\,\nabla^2_{(\varphi,z)}\underline{W}_{\text{eff}}(\varphi,z;s,\sigma) \\
=&\, 4Cs
\begin{pmatrix}
-z(sz\cos(2\varphi)+\sigma\cos\varphi \log z) & -(sz\sin(2\varphi)+\sigma\sin\varphi(\log z+1)) \\[2mm]
-(sz\sin(2\varphi)+\sigma\sin\varphi(\log z+1)) & \displaystyle  s\cos^2\varphi +\frac\sigma{z}\cos\varphi -s\frac{1+z^2}{(1-z^2)^2}
\end{pmatrix}.
\end{split}
\end{equation}

\section{Equilibrium configurations and dynamics}\label{sec_second_config}

The torque~$t$ in~\eqref{eq_grad_Wh0_2conf} vanishes, for $z\in(0,1)$, at $\varphi=0,\pi$, and the fact that $\partial^2_{\varphi\varphi} \underline{W}_{\text{eff}}(\pi,z;s,\sigma)=4Cs(-sz^2+\sigma z\log z)<0$ classifies the configuration with $\varphi=\pi$ as an unstable one; thus, from now on, we consider only $\varphi=0$ (corresponding to the $+-+$ arrangement of disclinations at finite $h$).

We notice that the torque may exhibit additional zeros: they correspond to those of
\begin{equation}\label{w12_nuovo}
\lim_{h\to0} w^h(\varphi,z;s,\sigma)=-4(s z \cos\varphi+\sigma\log z)\eqqcolon w(\varphi,z;s,\sigma),
\end{equation}
which, up to a factor, is the term the parenthesis in the first line in~\eqref{eq_grad_Wh0_2conf}.
The discussion of these additional equilibrium points, which are unstable, is carried out in Appendix~\ref{zeroes_w12_second_conf}.

Substituting $\varphi=0$ in \eqref{eq_grad_Wh0_2conf} and \eqref{eq_hessian_W_eff_2conf}, we get $t(0,z;s,\sigma)=0$, 
\begin{equation}\label{eq_force_Wh0_2conf_varphi0_ssigma}
f(0,z;s,\sigma)=-4Cs\biggl(-\frac{sz^3}{1-z^2}+\sigma(\log z+1)\biggr),
\end{equation}
and
\begin{equation*}
\begin{split}
\nabla^2_{(\varphi,z)}\underline{W}_{\text{eff}}(0,z;s,\sigma)
= 4Cs
\begin{pmatrix}
-z(sz+\sigma\log z) & 0 \\[2mm]
0 & \displaystyle  \frac{\sigma}{z}+s\frac{(z^2-3)z^2}{(1-z^2)^2}
\end{pmatrix}.
\end{split}
\end{equation*}
By introducing $\alpha\coloneqq s/\sigma\in(0,+\infty)$, the last two equations become

\begin{equation}\label{eq_force_Wh0_2conf_varphi0}
f^\alpha(z)
\coloneqq f(0,z;\alpha\sigma,\sigma)=-4C\alpha\sigma^2\biggl(1+\log z-\frac{\alpha z^3}{1-z^2}\biggr),
\end{equation}
and
\begin{equation}\label{eq_hessian_W_eff_2conf_alpha}
H^\alpha(z)
\coloneqq \nabla^2_{(\varphi,z)}\underline{W}_{\text{eff}}(0,z;\alpha\sigma,\sigma)
= 4C\alpha\sigma^2
\begin{pmatrix}
-z(\alpha z+\log z) & 0 \\[2mm]
0 & \displaystyle  \frac{1}{z}+\alpha\frac{(z^2-3)z^2}{(1-z^2)^2}
\end{pmatrix}.
\end{equation}

The equilibrium points for the force $f^\alpha$
depend on the value $\alpha$ of the relative strength of dislocation and disclination and on the signs of the diagonal elements of the Hessian matrix $H^\alpha$, 
see Table~\ref{2604271812}.

\subsection{Classification of equilibrium points}
In the limiting case $\alpha\to0$, corresponding to no dislocation in the system, we have, by continuity with respect to $\alpha$, $f^0(z)=0$: 
this is acceptable, since we would be left with the isolated disclination at the origin, which, by symmetry, is subject to no force.
 
For $\alpha>0$, there exists a critical value $\alpha_1>0$ such  that the equation $f^\alpha(z)=0$ has a unique solution $z_1$ (the pair $(\alpha_1,z_1)$ is determined imposing also the condition $\frac{\mathrm{d}}{\mathrm{d}z} f^\alpha(z)=0$), see Figure~\ref{fig:bifurcation_beta}-RIGHT. 
For $\alpha>\alpha_1$\,, no solution exists, whereas for $\alpha\in(0,\alpha_1)$ there exist two distinct equilibrium positions. 
Among these two solutions, the one corresponding to the larger value of $z$ is always unstable, since at that point $H^\alpha_{22}<0$.

It remains to discuss the nature of the smaller of the two solutions, which follows from the combined analysis of the signs of diagonal terms of the Hessian $H^\alpha$. 
First, note that $H^\alpha_{22}(z)>0$ for the smaller value of $z$ whenever $\alpha < \alpha_1$\,. 
Therefore, it suffices to analyze the sign of $H^\alpha_{11}$\,. 
This leads to a second critical value $\tilde{\alpha} < \alpha_1$\,. 
For $\alpha < \tilde{\alpha}$, one has $H^\alpha_{11}>0$, 
and the smaller~$z$ solution is stable. 
For $\tilde{\alpha} < \alpha < \alpha_1$, the stationary points are unstable. See Table~\ref{2604271812}.
Notice that no stationary point can exist for $z\leq e^{-1}$, this value corresponding to the limit case  $\alpha=0$ in the parenthesis in the expression of $f^\alpha$, see~\eqref{eq_force_Wh0_2conf_varphi0}.

\begin{table}[h!]
\centering
\begin{tabular}{|c|c|c c|c|}
\hline
\textbf{Range of $\alpha$} & \textbf{Stationary Points} 
& \multicolumn{2}{c|}{\textbf{Stability}} 
& \textbf{Reference} \\
 &  & $H^\alpha_{11}$\,, $H^\alpha_{22}$ & \textbf{Type} &  \textbf{(Fig.~\ref{fig:bifurcation_beta})}  \\
\hline
$0<\alpha<\tilde{\alpha}$
&$z\in(e^{-1},\tilde{z})$
& +,+
& stable& curves (c),(d)  \\
&  $z\in(\tilde{\tilde{z}},1)$
& -,-
& unstable
&  - \\
\hline
$\alpha=\tilde{\alpha}$
&$z=\tilde{z}$  
& 0,+
& unstable
& - \\
& $z=\tilde{\tilde{z}}$
& -,-
& unstable
& - \\
\hline
$\tilde{\alpha}<\alpha<\alpha_1$
&$z\in(\tilde{z},z_1)$   
& -,+
& unstable
& - \\
& $z\in(z_1,\tilde{\tilde{z}})$
& -,-
& unstable
& - \\
\hline
$\alpha=\alpha_1$
& $z=z_1$ 
& -,0
& unstable
& curve (b)   \\
\hline
$\alpha>\alpha_1$
& $\nexists$
&  
& 
& curve (a)   \\
\hline
\end{tabular}
\caption{Classification of equilibrium points. 
The approximate values are $\alpha_1\approx 1.853$ and $z_1\approx 0.485$.
When $\alpha=\tilde{\alpha}\approx 1.768$, the function $f^{\tilde{\alpha}}$ has two zeros, denoted by $\tilde{z}\approx 0.451$ and $\tilde{\tilde{z}}\approx 0.529$.}
\label{2604271812}
\end{table}

Below we show how the critical value $\tilde{\alpha}$ is determined and how the above analysis is carried out. 
Imposing simultaneously $f^\alpha(z)=0$ 
and $H^\alpha_{11}(z)=0$, 
under the constraints on $\alpha$ and $z$, 
yields the pair $(\tilde{z},\tilde{\alpha})$.
We study the sign of $H^\alpha_{11}(z)$: 
from \eqref{eq_hessian_W_eff_2conf_alpha},
\begin{eqnarray}
H^\alpha_{11}(z)
= -4C \alpha\sigma^2 z  ( \alpha z +\log z ) 
\end{eqnarray}
under the constraints $f^\alpha(z)=0$, 
$0\le\alpha<\alpha_1$\,, and $e^{-1}\le z<z_1$\,.
Thus we are lead to studying the sign of 
$$
1-\frac{\log z+1}{z^2},
\qquad\qquad
\textrm{for } 0\le\alpha<\alpha_1\,, e^{-1}\le z<z_1\,,
$$
which has a root at
\begin{equation}
\tilde{z} \approx 0.451,
\qquad
\tilde{\alpha}
= \frac{\log \tilde{z} + 1}{\tilde{z}^2}
\left(\frac{1 - \tilde{z}^2}{\tilde{z}}\right)
= \frac{1 - \tilde{z}^2}{\tilde{z}}
\approx 1.768.
\end{equation}

Observe that the function $z\mapsto 1-\frac{\log z + 1}{z^2}$ determines the sign of $H^\alpha_{11}(z)$ at the zeros of $f^\alpha$. 
In particular, $H^\alpha_{11}(z)>0$ whenever the solutions to $f^\alpha(z)=0$ satisfy $z < \tilde{z}$. 
This implies that the branch of stationary points with smaller $z$ is stable for $0 \le \alpha < \tilde{\alpha}$.

The configuration $(\alpha,z) = (\tilde{\alpha},\tilde{z})$ is unstable, since $z\mapsto H^\alpha_{11}(z)$ changes sign across $z=\tilde{z}$ (being positive for $z < \tilde{z}$ and negative for $z \in (\tilde{z},1)$).

Finally, we notice that the point $z=1$ is never a stationary point. 

\begin{figure}[h]
\centering
\includegraphics[width=0.95\textwidth]{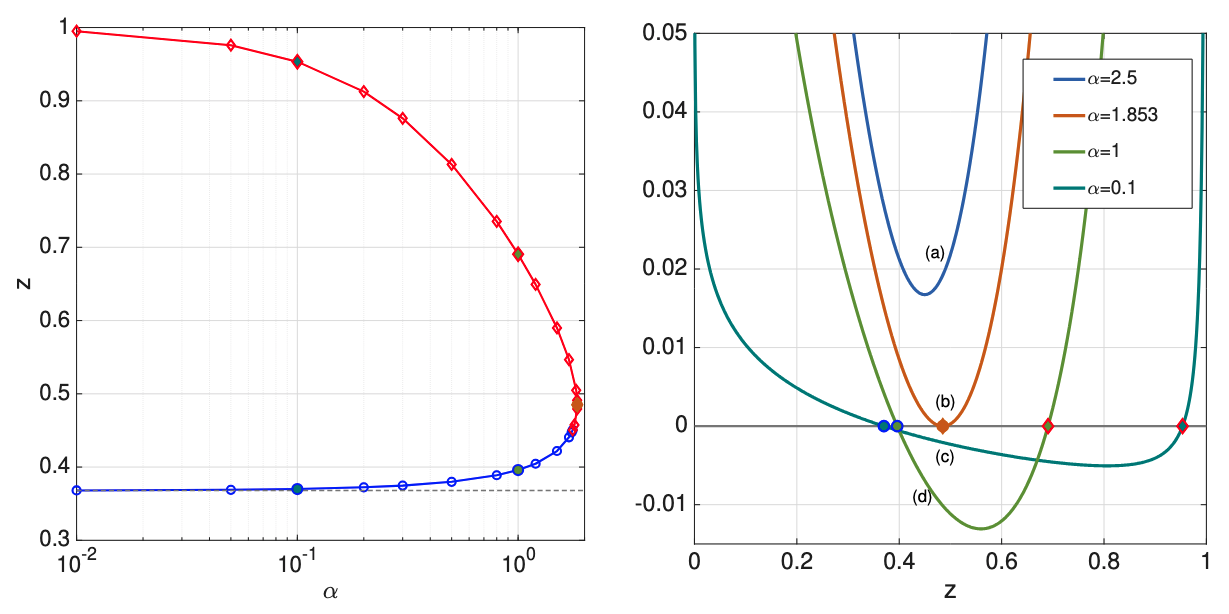}
\caption{LEFT. Bifurcation diagram of equilibrium points
as a function of parameter $\alpha$, calculated as zeros of $f^{\alpha}$
(in logarithmic scale).
RIGHT. Profiles of $z\mapsto f^\alpha(z)$ for different values of the parameter~$\alpha$. 
}\label{fig:bifurcation_beta}
\end{figure}

   \subsubsection{Discussion}

 Our analysis shows that equilibrium points may only occur in the interior of the domain, for $z\in(0,1)$, and never on the boundary. 
 We discover a family of stable equilibrium points determined by values of~$\alpha$ in the range $(0,\tilde\alpha)$, that is when the disclination is prevalent over the dislocation.
The corresponding stable branch of equilibrium points is $z\in (e^{-1},\tilde{z})$, also depending continuously on~$\alpha$. They are represented in the blue branch in Figure~\ref{fig:bifurcation_beta}. 
Here, the value $z=e^{-1}$ acts as the lower bound for the branch of these stable equilibrium points, but it is never reached, as it corresponds to the limiting case of $\alpha=0$ in the parenthesis of~\eqref{eq_force_Wh0_2conf_varphi0}.
Nevertheless, this value indicates that a characteristic length scale emerges.
It is represented by an arrangement of disclination and dislocation at mechanical equilibrium emerging from the energy minimization, and the value $z=e^{-1}$
indicates a minimum separation distance between the defects.

We observed that all stable equilibrium points of the system are achieved at $\varphi=0$, corresponding to the horizontal arrangement $+-+$ of the original system at finite $h>0$, and to the Burgers vector of the resulting dislocation pointing upwards in the limit as $h\to0$.
While many other equilibrium configurations may exist, even with  tilted orientations and non vertical alignment of the Burgers vector or horizontal alignment of the original 3-disclination system (for $h>0$), they are unstable.

\subsection{Dynamics: radial motion and collision time estimates}
 
By considering the driving force $f^\alpha$ introduced in \eqref{eq_force_Wh0_2conf_varphi0}, we can study the dissipative dynamics of the the limit system featuring the isolated disclination $\xi_3$ remaining stationary at the center of the disk and the limit dislocation placed at $z$ sliding along the horizontal segment $(0,1)$. 
This simplified one-degree-of-freedom system rules out possible rotations as $\varphi=0$ is frozen during the evolution; by restricting the motion to the radial direction, we remove rotational modes while preserving the main mechanical features of the problem. 
However this choice is not particularly restrictive as observed in \cite{BriCesMor2026}. 

The associated ODE reads 
\begin{eqnarray}\label{2605010647}
\frac{1}{\lambda}\frac{\de}{\de t} z(t)= f^\alpha(z) 
=-4C\alpha\sigma^2 \bigg( 1+\log z -\frac{\alpha z^3}{1-z^2}\bigg),
\end{eqnarray}
where $\lambda>0$ is a scalar quantity having the meaning of the inverse of a generalized viscous friction coefficient.
 
The highly nonlinear character of the ODE above makes it difficult to solve analytically. Apart from a few illustrative cases that we discuss below,
numerical solutions to \eqref{2605010647} for selected values of the parameter $\alpha$, parameterized by the value of the initial condition $z_0\coloneqq z(t=0)$, are shown in Figure~\eqref{2602211244}.
These solutions have been obtained in MATLAB environment by the explicit Euler integration method.

As a benchmark, we first consider the case of the dislocation only, whose motion is then driven by the presence of the boundary. 
To do this, we go back to the expression of the force given in \eqref{eq_force_Wh0_2conf_varphi0_ssigma}, and, by setting $\sigma=0$, and obtain the equation of motion
\begin{equation}\label{eq_dyn_2conf_sigma0}
\frac{1}{\lambda}\frac{\de}{\de t} z(t)= 4Cs^2\frac{z^3}{1-z^2},
\end{equation}
which can be solved explicitly, yielding 
\begin{equation}\label{eq_dislocation_motion}
    \log z^2(t)+\frac{1}{z^2(t)}=-8Cs^2t+\log z_0^2+\frac{1}{z_0^2}\,,
\end{equation}
$z_0\in(0,1)$ being the initial position.
By solving for $t^*$ such that $z(t^*)=1$, we find the collision time
\begin{equation}\label{eq_dislocation_collision_time}
    t^*=\frac{1}{8Cs^2}\biggl(\log z_0^2+\frac1{z_0^2}-1\biggr).
\end{equation}

Focusing on the scenario in which the dislocation is close to the boundary, $z\to1^-$, since the term $(1-z^2)^{-1}\to+\infty$, the dynamics in \eqref{2605010647} can be approximated by 
\begin{equation}
\frac{1}{\lambda}\frac{\de}{\de t} z(t) =
4C (\alpha\sigma)^2 \frac{ z^3}{1-z^2},
\end{equation}
which is exactly the same as in~\eqref{eq_dyn_2conf_sigma0}.
In both cases (be the dislocation alone in the system, of be it close to the boundary), the dynamics is as if the disclination were not present.
\footnote{It is interesting to compare the collision time obtained in~\eqref{eq_dislocation_collision_time} with that obtained for a \emph{screw} dislocation in the unit disk computed in \cite[formula (67)]{HudsonMorandotti2017}. By assuming that the initial position is $z_0=1-\delta$, we can expand~\eqref{eq_dislocation_collision_time} to second order in~$\delta$ to obtain 
\[t^*=\frac{1}{8C\pi}\biggl(\frac{2\pi}{s^2}\delta^2+O(\delta^3)\biggl),\]
which, upon taking $s^2=|b|^2=1$, coincides with the collision time $T^{\partial\Omega}_{\text{coll}}$ from \cite[formula (67)]{HudsonMorandotti2017}, up to the multiplicative constant containing the mechanical parameters.}
 
In constrast, when studying the convergence towards stable equilibrium points, the interaction between the disclination and the dislocation becomes apparent. 
To this aim, we consider the dynamics \eqref{2605010647} in a neighborhood of a point $z_\infty \in(1/e,\tilde z)$ such that $f^\alpha(z_\infty)=0$. 
By writing $z(t)=z_\infty+\zeta(t)$, for small $\zeta(t)$, and considering the first-order expansion of the force, we are lead to studying 
\begin{eqnarray}\label{eq_zeta_2conf_convtoeq}
\frac{1}{\lambda}\frac{\de}{\de t} \zeta(t)=f^\alpha(z_\infty+\zeta(t)) 
\approx \frac{\partial f^\alpha(z_\infty) 
}{\partial z}\zeta(t)=-H^\alpha_{22}(z_\infty) 
\zeta(t). 
\end{eqnarray}
Since $H^\alpha_{22}(z_\infty)>0$ owing to stability, the solution to~\eqref{eq_zeta_2conf_convtoeq} is a damped exponential, showing that the solution $t\mapsto z(t)$ converges to the equilibrium point $z_\infty$ as $t\to+\infty$.

This simple linearization analysis, together with the numerical solutions shown in Figure~\ref{2602211244}, provides a complete characterization of the dynamical behavior of the system under the simplified assumptions considered here.

First, we observe that whenever the dislocation is sufficiently close to the boundary, located at $z=1$, the boundary always attracts the dislocation with an increasing velocity. In particular, the velocity diverges as $z \to 1^{-}$, implying that the collision with the boundary occurs in finite time. Consequently, the point $z=1$ is not a stationary point of the dynamical system.

A collision with the boundary may occur for any value of the parameter~$\alpha$. More specifically, for  $\alpha \geq \tilde\alpha $, the dislocation always collides with the boundary regardless of its initial position $z_0\in(0,1)$.
In contrast, for 
$\alpha < \tilde{\alpha}$, a stable equilibrium point emerges at  $z \in (e^{-1},\tilde{z})$. In this regime, collision with the boundary is only possible if the initial condition is chosen within a sufficiently small left neighborhood of $z=1$, avoiding the basin of attraction of the stable equilibrium.

The characteristic time associated with the convergence of the dislocation toward the stable equilibrium that exists for $\alpha\in(0,\tilde{\alpha})$ is fundamentally different from the finite-time collision with the boundary discussed above. As demonstrated by the analysis leading to equation~\eqref{eq_zeta_2conf_convtoeq}, the dislocation approaches the stable equilibrium only asymptotically, reaching it only in the limit $t \to + \infty$. This behavior contrasts sharply with the finite-time collision observed when the dislocation moves toward the boundary.

This asymptotic convergence is a consequence of the elastic stress field generated by the isolated disclination and appears to be consistent with the experimental observations reported in \cite{TOKUZUMI2023118785}. In particular, those experiments provide evidence that the long-range elastic field of a disclination can significantly hinder the motion of nearby dislocations. Within the simplified framework of the present model, the disclination effectively acts as an obstacle to dislocation motion. As a result, a stable equilibrium configuration emerges, toward which the dislocation converges asymptotically, while remaining unable to pass through it.
A more comprehensive investigation is required to further characterize the disclination-induced hindrance of dislocation motion in more realistic systems with additional degrees of freedom. Such an analysis is left for future work.

\begin{figure}[htbp]
    \centering
\includegraphics[scale=0.33]{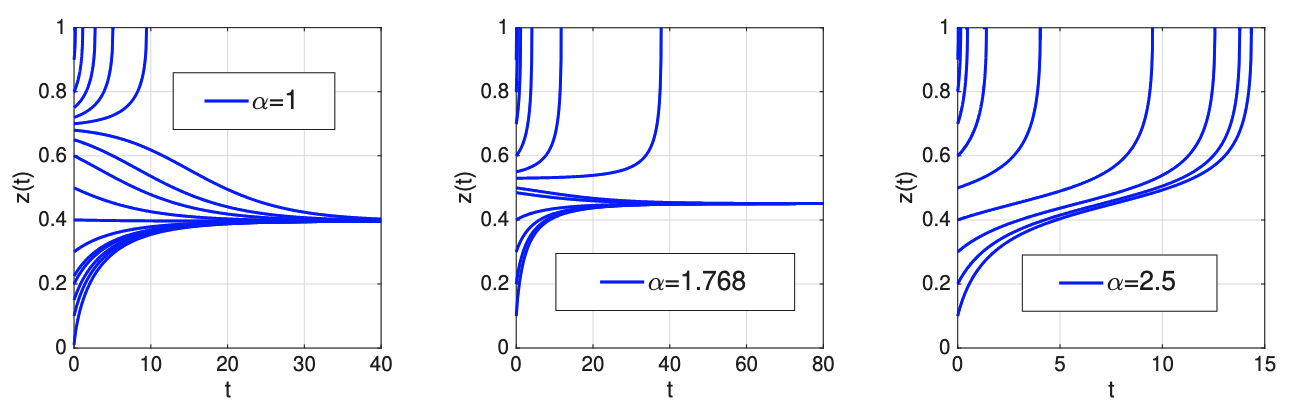}
  \caption{Solutions to~\eqref{2605010647} for selected values of~$\alpha$ and for different initial values~$z_0$\,. Solutions are obtained via numerical discretization in MATLAB environment via Euler explicit method, for the choice of parameters $\nu=0$, $E=1$, and $\sigma=1$.}
\label{2602211244}
\end{figure}

\section{Conclusion}
In this work, we have analyzed a simplified configuration of interacting disclination-disloca\-tion systems in a circular domain. Even in this reduced setting, the model reveals nontrivial features, including the emergence of stable equilibrium states inside the domain and characteristic length scales determined by the relative strength of the disclination and dislocation contributions.
Although the analysis is restricted to a single disclination interacting with a single dislocation under constrained motion, the reduced model captures the essential mechanisms of the interaction and exhibits a rich energy landscape with multiple equilibria and nontrivial stability properties. A systematic study of fully unconstrained systems involving multiple dislocations, disclinations, and dipoles is left for future work.

We have also characterized the dissipative dynamics and derived estimates for collision and relaxation times. Depending on the initial configuration and on the relative strength of the defects, the dynamics shows either finite-time boundary collisions or asymptotic convergence towards stable interior equilibria. The latter behavior appears to be a distinctive feature of coupled disclination-dislocation interactions.

Finally, our analysis uses Eshelby's representation of an edge dislocation as a disclination dipole to derive the corresponding dynamical system. An alternative approach relies on the extension of this kinematic equivalence to the dynamical level, when the renormalization is performed directly on the $h$-dependent system rather than after passing to the limiting model. This approach is explored in \cite{BriCesMor2026}.

\paragraph{Acknowledgment.}
This work was initiated when AS was a trainee student at the Joint Graduate School of Mathematics for Innovation, Kyushu University,  supported by a JASSO scholarship offered within the ``Kyushu University Program for Emerging Leaders in Science" (Q-PELS) program.
PC’s work is supported by JSPS KAKENHI Grant-in-Aid for Scientific Research (C) JP24K06797.
PC holds an honorary appointment at La Trobe University. 
PC and MM are members of the Gruppo Nazionale per l’Analisi Matematica, la Probabilità e le loro Applicazioni (GNAMPA) of the Istituto Nazionale di Alta Matematica (INdAM). 
MM acknowledges partial support from the MUR grant Geometric Analytic Methods for PDEs and Applications (2022SLTHCE cup E53D23005880006). 
This manuscript reflects only the authors’ views and opinions and the Italian Ministry cannot be considered responsible for them.
MM thanks the Institute of Mathematics for Industry, an International Joint Usage and Research Center located in Kyushu University, where part of the work contained in this paper was carried out.
The authors gratefully acknowledge the support from the 2025 IMI Joint Use Short-Term Joint Research Program (Reference No. 2025a033).

\appendix
\section{Useful results}
We collect some general results that give context to the formulae written in Section~\ref{sec_model}.

\subsection{Energetic formulation in general disks; non-dimensional form of the functional $\mathcal{I}$}\label{sec_app_nondim}
We present here the general energetic formulation in a disk of radius $R>0$.
For an atomic measure $\tilde\theta\in H^{-2}(B_R)$ describing the presence of $K\in\mathbb{N}$ disclinations,
\begin{equation}\label{atomic_measure_B_R}
\tilde\theta=\sum_{k=1}^K s_k\delta_{\tilde \xi_k}\,,  
\qquad \tilde\xi_k\in B_R\,,
\qquad \tilde \xi_k\neq\tilde \xi_j 
\quad\text{if}\quad
k\neq j
\end{equation}
($s_k$ being the Frank angles), the energy functional 
$\cI^{\tilde\theta}(\cdot;B_R)\colon H^2_0(B_R)\to\mathbb{R}$ defined for the Airy potential $\tilde v\in H^2_0(B_R)$ reads 
\begin{eqnarray}\label{eq_en_B_R}
     \mathcal{I}^{\tilde\theta}(\tilde v;B_R)= \cW(\tilde v;B_R)+\langle \tilde\theta, \tilde v\rangle =\frac{1}{2}\frac{1+\nu}{E}\int_{B_R} \big[|\nabla^2_{\tilde x} \tilde v(\tilde x)|^2-\nu(\Delta_{\tilde x} \tilde v(\tilde x))^2 \big]\,\de \tilde x+\langle \tilde \theta,\tilde v \rangle
  \end{eqnarray}
(notice that this is the functional $\mathcal{I}^\theta$ in \eqref{eq_en_B_1} when $R=1$).
By performing the change of variables 
$\tilde x=Rx$ (for $x\in B_1$), defining $x\mapsto v(x)\coloneqq \tilde v(Rx)$, and exploiting the $2$-homogeneity of the mechanical energy~$\cW$, we easily deduce the following chain of equalities 
\begin{equation}\label{2509041404}
\begin{split}
\mathcal{I}^{\tilde\theta}(\tilde v;B_R)=&\frac{1}{2}\frac{1+\nu}{E}\frac1{R^2} \int_{B_1} \big[ |\nabla^2 v(x)|^2-\nu(\Delta v(x))^2 \big]\,\de x+\langle \tilde\theta(R\,\cdot), \tilde v(R\,\cdot) \rangle \\
=&  \frac{1}{R^2} \big(\cW(v;B_1) +\langle R^2\tilde\theta(R\,
\cdot),v \rangle \big) =\frac{1}{R^2} \cI^{R^2\tilde \theta(R\,\cdot)}(v;B_1) \eqqcolon\frac{1}{R^2} \mathcal{I}^{\theta}(v; B_1),
 \end{split}
\end{equation}
where we have defined $\theta(\cdot) \coloneqq  R^2\tilde\theta(R\,\cdot)$\,. 
In particular, for the atomic measure $\delta_{\tilde\xi}$\,, we have $\tilde\xi=R\xi$ (for $\xi\in B_1$) and $\theta(x)=R^2\delta_{R\xi}(Rx)$.

\begin{remark}
    We stress here that the presence of both the second-order gradient in the mechanical energy~$\cW$ and the linear term  $\tilde v\mapsto\langle\tilde\theta,\tilde v\rangle$ destroy the scale-invariance which is typical of two-dimensional problems depending only on the first-order gradient.
    This will reflect in the evaluation of the minimal energy, see Remark~\ref{rem_min_en_eval} below.
\end{remark}

When treating an arrangement of disclinations that includes dipoles $\tilde d\pm\frac{\tilde h}2w_\varphi$ centered at $\tilde d\in B_R$\, of length $\tilde h>0$, and oriented in the direction $w_\varphi=(\cos\varphi,\sin\varphi)$, we trace the dependence on $\tilde h$ by denoting by $\tilde\theta^{\tilde h}$ the corresponding measure.
In this case, the measure $\tilde\theta^{\tilde h}$ associated with one such a dipole is 
\begin{equation}
\tilde\theta^{\tilde h}(\tilde x)=s\Big(\delta_{\tilde d+\frac{\tilde h}2w_\varphi}(\tilde x)-\delta_{\tilde d-\frac{\tilde h}2w_\varphi}(\tilde x)\Big),
\end{equation}
where $s$ is a given Frank angle. 
Consequently, if $\tilde h=Rh$ and $\tilde d=Rd$ (for $d\in B_1$), we have
\begin{equation}\label{eq_202512071033}
\begin{split}
\theta^{h}(x)=R^2 \tilde\theta^{Rh}(Rx)=& \, 
R^2s\Big(\delta_{R(d+\frac{h}2w_\varphi)}(Rx)-\delta_{R(d-\frac{h}2w_\varphi)}(Rx)\Big).
\end{split}
\end{equation}
Thanks to \eqref{2509041404} and \eqref{eq_202512071033}, if we assume the scaling as in \eqref{2509140958}
\begin{equation}\label{}
v^{h}=hv
\qquad\text{and}\qquad
\tilde v^{h}=h\tilde v,
\end{equation}
we have the following chain of equalities 
\begin{equation}\label{202509121117_bis}
\begin{split}
\cI^{\tilde\theta^{\tilde h}}(\tilde v^{h};B_R) =&\, h^2 \cI^{\tilde\theta^{Rh}/h}(\tilde v;B_R) = \frac{h^2}{R^2} \cI^{R^2\tilde\theta^{Rh}(R\,\cdot)/h}(v;B_1) 
= \frac{h^2}{R^2} \cI^{\theta^{h}/h}(v;B_1)=
\frac{1}{R^2} \cI^{\theta^{h}}(v^{h};B_1).
\end{split}
\end{equation}

\subsection{Green's function for the clamped disk problem}\label{ssec_Green}
In this section, we recall the Green's function for the clamped disk problem, since it will be used to write explicitly the minimizer of the functional \eqref{eq_en_B_R} in $H^2_0(B_R)$.

It is known (see, \emph{e.g.}, \cite[Section 4]{Nakai1978} and \cite[formula (2.9)]{CGMP2025}) that the solution to the differential problem
\begin{equation}\label{2510290855}
\begin{cases}
\displaystyle \Delta_{\tilde x}^2 u = -\frac{E}{1-\nu^2}\,s\,\delta_{\tilde\xi} & \text{in } B_R\,, \\[2mm]
u = \partial_n u =  0 & \text{on } \partial B_R\,,
\end{cases}
\end{equation}
is given, for any $\tilde x\in B_R$, by 
\begin{equation}\label{2510290857}
G_{\tilde\xi,R}(\tilde x)=\begin{cases}
    -CsR^2\,\overline{G}_{\tilde\xi,R}(\tilde x) & \text{if $\tilde x\neq\tilde \xi$,} \\[3pt]
    \displaystyle -CsR^2\,\bigg(1-\frac{|\tilde\xi|^2}{R^2}\bigg)^2 & \text{if $\tilde x=\tilde\xi$,}
\end{cases}
\end{equation}
where 
\begin{equation}\label{2510290905}
C\coloneqq \frac{E}{16\pi(1-\nu^2)}
\qquad\text{and}\qquad
\overline{G}_{\tilde\xi,R}(\tilde x)\coloneqq \frac{\abs{\tilde x-\tilde \xi}^2}{R^2} \bigg[\log\frac{\abs{\tilde x-\tilde \xi}^2}{R^2} -1 -\log(\ell_{\tilde \xi,R}(\tilde x))\bigg]+ \ell_{\tilde \xi,R}(\tilde x),
\end{equation}
with
\begin{equation}\label{elle}
\ell_{\tilde \xi,R}(\tilde x)\coloneqq \bigg(1-\frac{\abs{\tilde x}^2}{R^2}\bigg)\bigg(1-\frac{\abs{\tilde \xi}^2}{R^2}\bigg)+\frac{\abs{\tilde x-\tilde \xi}^2}{R^2}.
\end{equation}
\begin{remark}\label{rem_scale-inv}
Notice that $\ell$, and therefore $\overline{G}$, are scale-invariant, that is $\ell_{\tilde\xi,R}(\tilde x)=\ell_{\xi,1}(x)$ and $\overline{G}_{\tilde\xi,R}(\tilde x)=\overline{G}_{\xi,1}(x)\eqqcolon \overline{G}_{\xi}(x)$, if $\tilde\xi=R\xi$ and $\tilde x=Rx$ ($x,\xi\in B_1$).
Therefore, the dependence of the Green's function $G_{\tilde\xi,R}$ in \eqref{2510290857} on the radius $R$ is only through the factor $R^2$.
\end{remark}

\subsection{Explicit expression of the minimal energy}\label{ssec_min_en}
Since \eqref{2510290855} is the Euler--Lagrange equation of the functional \eqref{eq_en_B_R} for $\tilde \theta=\delta_{\tilde\xi}$\,, by superposition we obtain that the minimizer of $\cI^{\tilde\theta^{\tilde h}}(\cdot\,;B_R)$ for $\tilde \theta^{\tilde h}=\sum_{k=1}^K s_k\delta_{\tilde\xi^{\tilde h}_{k}}$ is 
\begin{equation}\label{2509121514_app}
\underline{\tilde v}(\tilde x)= -\big(\tilde\theta^{\tilde h}* G_{\tilde\xi, R}\big)(\tilde x)=-CR^2\sum_{k=1}^K s_k\, \overline{G}_{\tilde\xi^{\tilde h}_{k},R}(\tilde x).
\end{equation}
Thanks to Clapeyron's Theorem (see, \emph{e.g.}, \cite[Theorem 2.2]{CGMP2025}), we can now compute the mechanical energy at equilibrium 
\begin{equation}\label{2510291059_app}
\begin{split}
\underline{\widetilde W} \coloneqq \cW(\underline{\tilde v};B_R)  = & -\frac{1}{2} \big\langle -\tilde\theta^{\tilde h}, \underline{\tilde v} \big\rangle = \frac{CR^2}{2} \left\langle\sum_{k=1}^K s_k\delta_{\tilde\xi^{\tilde h}_{k}}\,, \sum_{j=1}^K s_j \overline{G}_{\tilde \xi^{\tilde h}_{j},R}\right\rangle \\
& = \frac{CR^2}{2} 
\sum_{k,j=1}^K s_ks_j\,\overline{G}_{ \xi^h_{j}}( \xi^h_{k}) = \frac{CR^2}{2} 
\cS\circ \overline{\cG}^{ h} 
=R^2 \underline{W}=R^2\cW(\underline{v};B_1)\,,
\end{split} 
\end{equation}  
where we used Remark~\ref{rem_scale-inv} and where $\underline{v}$ is the minimizer of $\cI^{\theta}(\cdot\,;B_1)$, for $\theta(\cdot)=R^2\tilde\theta(R\,\cdot)$, see below \eqref{2509041404}.
In \eqref{2510291059_app}, $\cS$ and $\overline{\cG}^{h}$ are the $K\times K$ matrices whose entries are 
\begin{equation}\label{matricesSG}
\cS_{kj}=s_ks_j 
\qquad\text{and}\qquad \overline{\cG}^{ h}_{kj}=\overline{G}_{ \xi^h_{j}}( \xi^h_{k}).
\end{equation}

\begin{remark}\label{rem_min_en_eval}
The equality $\widetilde{\underline{W}}=R^2\underline{W}$ (see \eqref{2510291059_app}) involving the minimal energy on the unit disk $B_1$ and the disk $B_R$ of radius $R$ recovers the usual scaling by~$R^2$ of the mechanical energy at equilibrium in a disk containing disclinations, see~\cite{SN88}. 
\end{remark}

\subsection{Additional stationary points: zeros of $w$ in \eqref{w12_nuovo}}\label{zeroes_w12_second_conf}
In this Appendix, we study the presence of an additional equilibrium point in the second configurations discussed in Section~\ref{sec_second_config} and its stability.
By relying on the parameter $\alpha=s/\sigma\in(0,+\infty)$ that measures the mutual intensity of the dislocation and disclination, we study the zeros of~\eqref{eq_grad_Wh0_2conf} coming from those of the torque given by the vanishing of the function $w$ defined in~\eqref{w12_nuovo}.
In Proposition~\ref{prop_zero_w12}, we will prove that there exists an equilibrium point for the configuration studied in Section~\ref{sec_second_config}; its instability is proved in Proposition~\ref{prop_zero_w12_unstable}.

\begin{proposition}\label{prop_zero_w12}
    There exists a value $\alpha_*\in(0,+\infty)$, whose approximate value is provided in~\eqref{valoriapprossimati} below, such that system~\eqref{eq_grad_Wh0_2conf} has a stationary point $(\varphi^{**}(\alpha),z^{**}(\alpha))$ for every $\alpha\in(0,\alpha_*)$.
\end{proposition}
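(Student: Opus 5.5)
We look for stationary points of \eqref{eq_grad_Wh0_2conf} with $\sin\varphi\neq0$. For these, the torque vanishes exactly when $w(\varphi,z;s,\sigma)=0$, which is the same as
\begin{equation*}
sz\cos\varphi+\sigma\log z=0 .
\end{equation*}
Dividing by $\sigma$ and using $\alpha=s/\sigma$, this gives the explicit relation
\begin{equation*}
\cos\varphi=-\frac{\log z}{\alpha z}.
\end{equation*}

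Next we substitute this into the force component of \eqref{eq_grad_Wh0_2conf}, after dividing by $\sigma$. Then $f=0$ becomes the one-variable equation
\begin{equation*}
g_\alpha(z)\coloneqq \frac{(\log z)^2}{\alpha z}-\frac{\alpha z}{1-z^2}-\frac{(\log z+1)\log z}{\alpha z}=0 .
\end{equation*}
Multiplying by $\alpha z$ and simplifying, the $(\log z)^2$ terms cancel, and we obtain
\begin{equation*}
-\log z-\frac{\alpha^2 z^2}{1-z^2}=0,
\qquad\text{that is,}\qquad
\alpha^2=\frac{(1-z^2)(-\log z)}{z^2}\eqqcolon q(z).
\end{equation*}
We must also keep the admissibility constraint $\lvert\cos\varphi\rvert<1$, so that $\sin\varphi\neq0$. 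Since $z\in(0,1)$ gives $\cos\varphi>0$, this reduces to $-\log z<\alpha z$.

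The plan is therefore to show that, for every small $\alpha$, some $z\in(0,1)$ satisfies both conditions.

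\emph{Solving the first condition.} The function $q$ is continuous and strictly decreasing on $(0,1)$, because both factors $(1-z^2)/z^2$ and $-\log z$ are positive and decreasing. Moreover $q(z)\to+\infty$ as $z\to0^+$ and $q(z)\to0$ as $z\to1^-$. Hence for every $\alpha>0$ there is a unique $z^{**}(\alpha)\in(0,1)$ with $q(z^{**})=\alpha^2$. It depends continuously on $\alpha$, is decreasing in $\alpha$, and tends to $1$ as $\alpha\to0$.

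\emph{Checking admissibility.} At $z=z^{**}$ we have $-\log z=\alpha^2z^2/(1-z^2)$. The constraint $-\log z<\alpha z$ therefore becomes
\begin{equation*}
\frac{\alpha z}{1-z^2}<1,
\qquad\text{that is,}\qquad
\alpha<\frac{1-z^2}{z}.
\end{equation*}
This inequality is where I expect the main work to lie, since both sides move with $\alpha$. The idea is to parametrize by $z$ instead of $\alpha$. The condition is equivalent to $q(z)<(1-z^2)^2/z^2$, which in turn is equivalent to $-\log z<1-z^2$. The function $z\mapsto 1-z^2+\log z$ is negative for $z<z_c$ and positive for $z\in(z_c,1)$. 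Here $z_c\in(0,1)$ is its unique zero other than $1$: the function vanishes at $z=1$ and has derivative $-2z+1/z$, so it increases up to $1/\sqrt2$ and decreases afterwards.

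\emph{Conclusion.} Set $\alpha_*\coloneqq\sqrt{q(z_c)}$, equivalently $\alpha_*=(1-z_c^2)/z_c$. Since $q$ is monotone, $\alpha\in(0,\alpha_*)$ corresponds exactly to $z^{**}(\alpha)\in(z_c,1)$, and there admissibility holds. We then define
\begin{equation*}
\varphi^{**}(\alpha)=\arccos\!\left(-\frac{\log z^{**}}{\alpha z^{**}}\right)\in\Bigl(0,\frac{\pi}{2}\Bigr),
\end{equation*}
and the symmetric value $-\varphi^{**}$ works as well. Numerically, solving $1-z^2+\log z=0$ gives $z_c\approx0.451$, so $\alpha_*\approx1.768$. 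This matches $\tilde\alpha$ in Table~\ref{2604271812}, which is a useful sanity check, since at $z_c$ the branch meets $\varphi=0$ exactly where $H^\alpha_{11}$ vanishes. I would record these values as \eqref{valoriapprossimati}.
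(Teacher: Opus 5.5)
Your proof is correct, but it handles the key step differently from the paper, and it is more complete. The reduction is the same in both: solve the torque equation as $\cos\varphi=F_\alpha(z)=-\log z/(\alpha z)$ and substitute into the force equation, which gives the paper's $G_\alpha(z)=0$, equivalent to your $\alpha^2=q(z)$.

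The difference lies in the admissibility constraint $F_\alpha(z^{**}(\alpha))<1$. The paper treats it as a comparison of two curves in $\alpha$: $z^*(\alpha)$, defined by $F_\alpha(z^*)=1$, and $z^{**}(\alpha)$. It shows by implicit differentiation that both decrease, and locates their crossing $\alpha_*$ from $1-z^2=-\log z$. It then evaluates the two derivatives at $\alpha_*$ numerically to get $z^{**}>z^*$ on a left neighbourhood of $\alpha_*$. For the rest of $(0,\alpha_*)$ it relies on numerical inspection, as it also does for the sign change behind the Intermediate Value Theorem step.

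You instead eliminate $\alpha$ along the solution curve. Using $\alpha^2=q(z^{**})$, the constraint becomes the $\alpha$-free inequality $1-z^2+\log z>0$. An elementary sign study shows its solution set in $(0,1)$ is exactly $(z_c,1)$; you use the limit $-\infty$ at $0^+$ implicitly. Strict monotonicity of $q$ then turns this into $\alpha<\alpha_*=(1-z_c^2)/z_c$.

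Your route buys three things:
\begin{itemize}
\item a fully analytic proof of the global statement, with no numerical step;
\item the sharper conclusion that stationary points with $\sin\varphi\neq0$ exist if and only if $\alpha<\alpha_*$, so $z^*$ and $z^{**}$ cross exactly once;
\item a transparent reason why $(\alpha_*,z_*)$ coincides with $(\tilde\alpha,\tilde z)$ of Table~\ref{2604271812}.
\end{itemize}
For the last point, $1-z^2+\log z=0$ is the same equation as $1-(\log z+1)/z^2=0$. So the tilted branch bifurcates from the $\varphi=0$ branch exactly where $H^\alpha_{11}$ vanishes.

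The paper's route additionally gives the monotonicity of $z^*$ and $z^{**}$ in $\alpha$ and their slopes at $\alpha_*$. The statement itself does not need these.
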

\begin{proof}
    The equations for the stationary points of force and torque in the limit $h\to0$ are obtained from \eqref{eq_grad_Wh0_2conf} and \eqref{w12_nuovo} and read
    \begin{equation}\label{sistema_per_equilibrio}
    \begin{cases}
    \displaystyle \frac{\alpha z}{1-z^2}-\alpha z\cos^2\varphi-\cos\varphi(1+\log z)=0, \\[2mm]
    \alpha z\cos\varphi+\log z=0.
    \end{cases}
    \end{equation}
    By solving the second equation above, we find the condition
    \begin{equation}\label{constraint_coseno}
    \cos\varphi=-\frac{\log z}{\alpha z}\eqqcolon F_\alpha(z),
    \end{equation}
    which imposes the constraint that $F_\alpha(z)\in(-1,1)$, for every $\alpha>0$ and $z\in(0,1)$. 
    An immediate inspection of the function $z\mapsto F_\alpha(z)$ reveals that the image $F_\alpha((0,1))=[0,+\infty)$. 
    The smoothness, strict monotonicity of $F_\alpha$, and the fact that $F_\alpha(0^+)=+\infty$ and $F_\alpha(1^-)=0$ guarantee that there exists a unique point $z^*(\alpha)$ such that $F_\alpha(z^*(\alpha))=1$ and $F_\alpha(z)\in (0,1)$ for $z\in(z^*(\alpha), 1)$.
    This yields that for such values of $z$ the constraint \eqref{constraint_coseno} is satisfied and can be possibly inverted to find the corresponding value of $\varphi\in(0,\pi/2)$.

    By substituting \eqref{constraint_coseno} in the first equation in \eqref{sistema_per_equilibrio}, we obtain
    \begin{eqnarray}\label{2511111503}
    G_\alpha(z)\coloneqq\frac{\alpha z}{1-z^2}+\frac{\log z}{\alpha z}=0.
    \end{eqnarray}
    Notice that the function $G_\alpha\colon(0,1)\to\mathbb{R}$ is smooth for every $\alpha>0$, strictly increasing, and $G_\alpha(0^+)=-\infty$ and $G_\alpha(1^-)=+\infty$. 
    Therefore, there exists a unique point $z^{**}(\alpha)$ such that $G_\alpha(z^{**}(\alpha))=0$.

    These considerations imply that system \eqref{sistema_per_equilibrio} has a unique solution $(\varphi^{**}(\alpha),z^{**}(\alpha))$, provided that 
    \begin{equation}\label{condizionenecessaria}
    z^{**}(\alpha)\in (z^*(\alpha),1).
    \end{equation}
    In this case,
    \begin{equation}\label{varphi**}
    (0,\pi/2)\ni \varphi^{**}(\alpha)=\arccos(F_\alpha(z^{**}(\alpha))).
    \end{equation}

    We now investigate when condition \eqref{condizionenecessaria} is satisfied.
    We start by noting that the functions $\alpha\mapsto z^{*}(\alpha)$ and $\alpha\mapsto z^{**}(\alpha)$ are differentiable by the Implicit Function Theorem, and we now prove that they are also monotone decreasing.
    This can be obtained by implicitly differentiating the equations $F_\alpha(z^{*}(\alpha))=1$ and $G_\alpha(z^{**}(\alpha))=0$ with respect to $\alpha$. 
    We obtain
    \begin{eqnarray}\label{68}
    \frac{\de}{\de\alpha}z^*(\alpha)=\frac{-(z^{*}(\alpha))^2}{1-\log z^*(\alpha) }<0
    \;\;\text{and}\;\;
    \frac{\de}{\de \alpha}z^{**}(\alpha)
    =\frac{-2\alpha (z^{**}(\alpha))^3 \big[1 - (z^{**}(\alpha))^2\big]}
    {2\alpha^2 (z^{**}(\alpha))^2 + \big(1 - (z^{**}(\alpha))^2\big)^2}<0.
    \end{eqnarray}

    The crucial condition arising from \eqref{condizionenecessaria} is that $z^{**}(\alpha)> z^{*}(\alpha)$. 
    Upon numerical evaluation (see Figure~\ref{2511081735}), we see that the ordering of $z^{*}(\alpha)$ and $z^{**}(\alpha)$ changes with $\alpha$, so that we are lead to study the sign of the (at least continuous) function $\alpha\mapsto z^{**}(\alpha)-z^{*}(\alpha)$. 
    By the Intermediate Value Theorem, there exists a value $\alpha_*$ such that $z^{**}(\alpha_*)= z^{*}(\alpha_*)\eqqcolon z_*$\,.
    The values of $\alpha_*$ and $z_*$ are found by imposing that $F_{\alpha_*}(z_*)=1$ and $G_{\alpha_*}(z_*)=0$.
    Recalling \eqref{constraint_coseno} and \eqref{2511111503}, we find that $z_*$ is the unique solution in $(0,1)$ to 
    $1-z^2=-\log z$.
    Numerical approximations provide 
    \begin{equation}\label{valoriapprossimati}
    z_*=z^{*}(\alpha_*)=z^{**}(\alpha_*) \approx 0.451
    \qquad\text{and}\qquad
    \alpha_* \approx 1.768.
    \end{equation}
    The derivatives in \eqref{68} 
    computed with the values \eqref{valoriapprossimati} are
    \begin{equation}
    \frac{\de}{\de\alpha}z^{*}(\alpha) \bigg|_{\alpha=\alpha_*} \approx -0.251
    \qquad\text{and}\qquad
    \frac{\de}{\de\alpha}z^{**}(\alpha) \bigg|_{\alpha=\alpha_*} \approx -0.450,
    \end{equation}
    from which we deduce that, at least in a left neighborhood of $\alpha_*$ condition \eqref{condizionenecessaria} is verified.
    A numerical inspection actually yields that \eqref{condizionenecessaria} is verified for every $\alpha\in(0,\alpha_*)$.

    Therefore, if $\alpha\in(0,\alpha_*)$, then the values $(\varphi^{**}(\alpha),z^{**}(\alpha))$ given by $G_\alpha(z^{**}(\alpha))=0$ and by \eqref{varphi**} are the desired solution to~\eqref{sistema_per_equilibrio}.
\end{proof}

\begin{proposition}\label{prop_zero_w12_unstable}
    For every $\alpha\in(0,\alpha_*)$, where $\alpha_*$ is provided in the proof of Proposition~\ref{prop_zero_w12}, the stationary point $(\varphi^{**}(\alpha),z^{**}(\alpha))$ is unstable.
\end{proposition}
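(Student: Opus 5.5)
The plan is to show that the Hessian \eqref{eq_hessian_W_eff_2conf} at $(\varphi^{**}(\alpha),z^{**}(\alpha))$ has negative determinant. Then the critical point is a saddle of $\underline{W}_{\text{eff}}$, hence unstable. Since $\underline{W}_{\text{eff}}$ depends on $(s,\sigma)$ only through the factor $Cs\sigma$ and the ratio $\alpha$, I will write $s=\alpha\sigma$ and factor out $4C\alpha\sigma^2$. I will then simplify each entry using the two equilibrium relations from the proof of Proposition~\ref{prop_zero_w12}.

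The first relation is $\alpha z\cos\varphi=-\log z$, from \eqref{constraint_coseno}. The second comes from \eqref{2511111503}, namely $\alpha^2z^2=-(1-z^2)\log z$. Throughout, $z=z^{**}(\alpha)\in(z^*(\alpha),1)$ and $\varphi=\varphi^{**}(\alpha)\in(0,\pi/2)$, so $\sin\varphi\neq0$ and $\log z<0$. With these relations the entries reduce as follows.
\begin{itemize}
\item For the $(1,1)$ entry, $\alpha z\cos(2\varphi)+\cos\varphi\log z=\alpha z\cos^2\varphi-\alpha z+\cos\varphi(\alpha z\cos\varphi+\log z)=-\alpha z\sin^2\varphi$. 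Hence the reduced entry equals $\alpha z^2\sin^2\varphi>0$. So the $\varphi$-direction alone does not give instability, which is why the determinant is needed.
\item For the off-diagonal entry, $\alpha z\sin(2\varphi)+\sin\varphi(\log z+1)=\sin\varphi(1+\alpha z\cos\varphi)=\sin\varphi(1-\log z)$.
\item For the $(2,2)$ entry, multiplying by $\alpha z^2$ and substituting both relations gives
\[
\alpha z^2\Big(\alpha\cos^2\varphi+\tfrac{\cos\varphi}{z}-\alpha\tfrac{1+z^2}{(1-z^2)^2}\Big)=(\log z)^2-\log z+\frac{1+z^2}{1-z^2}\log z .
\]
\end{itemize}

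Putting these together, the reduced determinant equals
\[
\sin^2\varphi\Big[\alpha z^2 H_{22}-(1-\log z)^2\Big].
\]
Here $H_{22}$ denotes the reduced $(2,2)$ entry. A direct expansion shows that the bracket equals
\[
-1+\log z\Big(1+\tfrac{1+z^2}{1-z^2}\Big)=-1+\frac{2\log z}{1-z^2}<0 \qquad\text{for } z\in(0,1).
\]
Hence $\det\nabla^2\underline{W}_{\text{eff}}(\varphi^{**},z^{**})<0$. The Hessian therefore has one positive and one negative eigenvalue, and the stationary point is unstable for every $\alpha\in(0,\alpha_*)$.

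The main obstacle is the algebra in the $(2,2)$ entry. One has to eliminate $\cos\varphi$ and $\alpha$ in the right order: first use $\alpha z\cos\varphi=-\log z$, then $\alpha^2z^2=-(1-z^2)\log z$ on the term $\alpha^2z^2(1+z^2)/(1-z^2)^2$. Only then does the cancellation of $(\log z)^2$ happen cleanly, and I would double-check that step by a numerical evaluation at, say, $\alpha=1$.
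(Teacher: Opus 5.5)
Your proof is correct and follows the same route as the paper. You evaluate the Hessian \eqref{eq_hessian_W_eff_2conf} at $(\varphi^{**}(\alpha),z^{**}(\alpha))$ using $\alpha z\cos\varphi=-\log z$ and $\alpha^2z^2=-(1-z^2)\log z$, get a positive $(1,1)$ entry and a negative determinant, and conclude the point is a saddle. Your bracket $-1+\frac{2\log z}{1-z^2}$ is the paper's $-\frac{(1-z^2)^2+2\alpha^2z^2}{(1-z^2)^2}$ via the second relation, and your reduced $(2,2)$ entry simplifies to $\frac{\alpha(\alpha^2-2)z^2}{(1-z^2)^2}$, which is right (the paper's display has $z$ where $z^2$ should be, though its determinant is correct).
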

\begin{proof}
To prove instability we will show that the Hessian matrix~\eqref{eq_hessian_W_eff_2conf} of the energy $\underline{W}_{\text{eff}}$ computed at $(\varphi^{**}(\alpha),z^{**}(\alpha))$ is indefinite.
Indeed, evaluating \eqref{eq_hessian_W_eff_2conf} at
$(\varphi^{**}(\alpha),z^{**}(\alpha))$, therefore keeping \eqref{constraint_coseno} and \eqref{2511111503} into account, 
yields that 
\begin{equation*}\label{Hessian_second_config}
\begin{split}
H^\alpha(\varphi^{**}(\alpha),z^{**}(\alpha))\coloneqq
& \nabla^2\underline{W}_{\text{eff}}(\varphi^{**}(\alpha),z^{**}(\alpha);\alpha\sigma,\sigma) \\
=& -4C\alpha\sigma^2\!
\begin{pmatrix}
-\alpha (z^{**}(\alpha))^2\sin^2\varphi^{**}(\alpha) & \sin\varphi^{**}(\alpha)(1-\log z^{**}(\alpha)) \\
\sin\varphi^{**}(\alpha)(1-\log z^{**}(\alpha)) & \displaystyle \frac{\alpha(2-\alpha^2)z^{**}(\alpha)}{(1-(z^{**}(\alpha))^2)^2}
\end{pmatrix}\!.
\end{split}
\end{equation*}
From this 
we see that $H^\alpha_{11}(\varphi^{**}(\alpha),z^{**}(\alpha)) 
=4C\alpha^2\sigma^2(z^{**}(\alpha))^2\sin^2\varphi^{**}(\alpha)>0$ and that 
$$\det H^\alpha(\varphi^{**}(\alpha),z^{**}(\alpha))
=-16C^2\alpha^2\sigma^4\sin^2\varphi^{**}(\alpha) \frac{(1-(z^{**}(\alpha))^2)^2+2\alpha^2(z^{**}(\alpha))^2}{(1-(z^{**}(\alpha))^2)^2}<0,$$
which classifies the point $(\varphi^{**}(\alpha),z^{**}(\alpha))$ as a saddle point, and therefore an unstable equilibrium.
\end{proof}

\begin{figure}[htbp]
    \centering
    \includegraphics[scale=0.45]{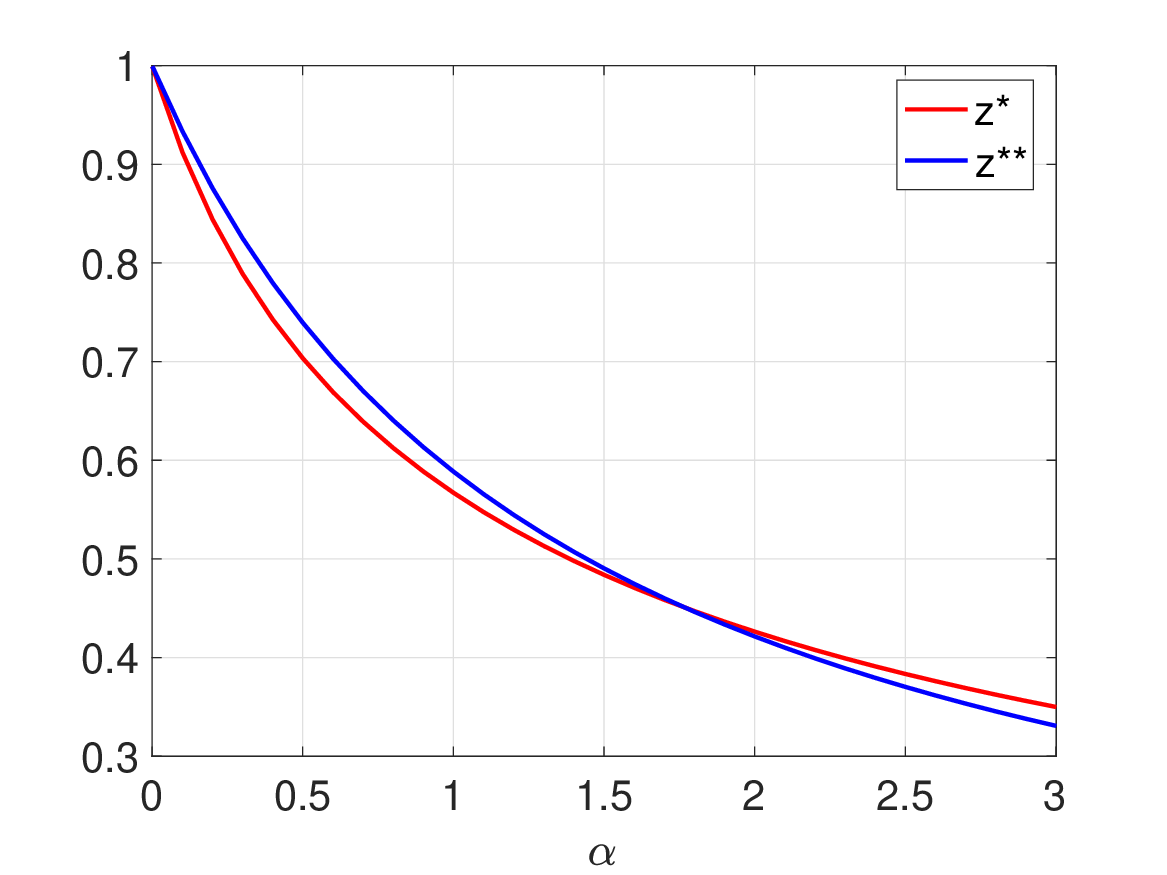}
\caption{Plots of $z^*,z^{**}$, solutions of $F_\alpha(z)=1$ (see~\eqref{constraint_coseno}) and $G_\alpha(z)=0$ (see~\eqref{2511111503}), respectively, as functions of $\alpha$.
The numerically estimated intersection occurs at $\alpha_*=1.768$, see~\eqref{valoriapprossimati}.}
\label{2511081735}
\end{figure}

\addcontentsline{toc}{section}{References}
\bibliographystyle{siam}
\bibliography{refsPatrick.bib}

@article{LAZAR06,
	author = {Markus Lazar and G{\'e}rard A. Maugin and Elias C. Aifantis},
	doi = {https://doi.org/10.1016/j.ijsolstr.2005.07.005},
	issn = {0020-7683},
	journal = {International Journal of Solids and Structures},
	number = {6},
	pages = {1787-1817},
	title = {Dislocations in second strain gradient elasticity},
	url = {https://www.sciencedirect.com/science/article/pii/S0020768305004373},
	volume = {43},
	year = {2006}}

@article{BriCesMor2026,
	author = {Nicol\`{o} Briatico and Pierluigi Cesana and Marco Morandotti},
	journal = {\href{http://arxiv.org/abs/2608.12233}{http://arxiv.org/abs/2608.12233}},
	title = {{E}ffective {D}ynamics of {D}isclination {P}airs},
	year = 2026}

@article{DENG20073646,
	author = {Shouchun Deng and Jinxing Liu and Naigang Liang},
	doi = {https://doi.org/10.1016/j.ijsolstr.2006.10.011},
	issn = {0020-7683},
	journal = {International Journal of Solids and Structures},
	number = {11},
	pages = {3646-3665},
	title = {Wedge and twist disclinations in second strain gradient elasticity},
	url = {https://www.sciencedirect.com/science/article/pii/S0020768306004173},
	volume = {44},
	year = {2007}}

@article{ROMANOV19941581,
	author = {A.E. Romanov and E.C. Aifantis},
	doi = {https://doi.org/10.1016/0956-716X(94)90312-3},
	issn = {0956-716X},
	journal = {Scripta Metallurgica et Materialia},
	number = {12},
	pages = {1581-1586},
	title = {Nonuniform misfit dislocation distributions in films},
	url = {https://www.sciencedirect.com/science/article/pii/0956716X94903123},
	volume = {30},
	year = {1994}}

@article{ROMANOV1993707,
	author = {A.E Romanov and E.C Aifantis},
	doi = {https://doi.org/10.1016/0956-716X(93)90423-P},
	issn = {0956-716X},
	journal = {Scripta Metallurgica et Materialia},
	number = {5},
	pages = {707-712},
	title = {On the kinetic and diffusional nature of linear defects},
	url = {https://www.sciencedirect.com/science/article/pii/0956716X9390423P},
	volume = {29},
	year = {1993}}

@article{Wang2023,
	address = {School of Mechanical and Power Engineering, East China University of Science and Technology, Shanghai, China.; Department of Mechanical Engineering, University of Alberta, Edmonton, AB, Canada.},
	auid = {ORCID: 0000-0003-4741-0165},
	author = {Wang, Xu and Schiavone, Peter},
	cois = {The author(s) declared no potential conflicts of interest with respect to the research, authorship and/or publication of this article.},
	copyright = {{\copyright}The Author(s) 2023.},
	crdt = {2023/11/16 04:24},
	date = {2023 Nov},
	dep = {20230426},
	doi = {10.1177/10812865231166081},
	edat = {2023/11/16 06:45},
	issn = {1081-2865 (Print); 1741-3028 (Electronic); 1081-2865 (Linking)},
	jid = {101585755},
	journal = {Math Mech Solids},
	jt = {Mathematics and mechanics of solids : MMS},
	language = {eng},
	lid = {10.1177/10812865231166081 {$[$}doi{$]$}},
	lr = {20231116},
	mhda = {2023/11/16 06:46},
	month = {Nov},
	number = {11},
	oto = {NOTNLM},
	own = {NLM},
	pages = {2396--2403},
	phst = {2023/01/09 00:00 {$[$}received{$]$}; 2023/03/11 00:00 {$[$}accepted{$]$}; 2023/11/16 06:46 {$[$}medline{$]$}; 2023/11/16 06:45 {$[$}pubmed{$]$}; 2023/11/16 04:24 {$[$}entrez{$]$}; 2023/11/08 00:00 {$[$}pmc-release{$]$}},
	pii = {10.1177{\_}10812865231166081},
	pl = {United States},
	pmc = {PMC10630138},
	pmcr = {2023/11/08},
	pmid = {37969747},
	pst = {ppublish},
	pt = {Journal Article},
	status = {PubMed-not-MEDLINE},
	title = {Interaction between an edge dislocation and a circular elastic inhomogeneity with Steigmann-Ogden interface.},
	volume = {28},
	year = {2023}}

@article{cryst9110584,
	article-number = {584},
	author = {Zhang, Zhibo and Shao, Cancan and Wang, Shuncheng and Luo, Xing and Zheng, Kaihong and Urbassek, Herbert M.},
	doi = {10.3390/cryst9110584},
	issn = {2073-4352},
	journal = {Crystals},
	number = {11},
	title = {Interaction of Dislocations and Interfaces in Crystalline Heterostructures: A Review of Atomistic Studies},
	url = {https://www.mdpi.com/2073-4352/9/11/584},
	volume = {9},
	year = {2019}}

@article{ACHARYA20171,
	author = {Amit Acharya and Michael Widom},
	doi = {https://doi.org/10.1016/j.jmps.2017.03.014},
	issn = {0022-5096},
	journal = {Journal of the Mechanics and Physics of Solids},
	pages = {1-11},
	title = {A microscopic continuum model for defect dynamics in metallic glasses},
	url = {https://www.sciencedirect.com/science/article/pii/S002250961630151X},
	volume = {104},
	year = {2017}}

@article{tng,
	author = {Tang, Yizhe},
	da = {2018/01/09},
	doi = {10.1038/s41598-017-18254-5},
	id = {Tang2018},
	isbn = {2045-2322},
	journal = {Scientific Reports},
	number = {1},
	pages = {140},
	title = {Uncovering the inertia of dislocation motion and negative mechanical response in crystals},
	ty = {JOUR},
	url = {https://doi.org/10.1038/s41598-017-18254-5},
	volume = {8},
	year = {2018}}

@article{Colin2025Disclination,
	author = {J{\'e}r{\^o}me Colin},
	doi = {10.1177/10812865241304681},
	journal = {Mathematics and Mechanics of Solids},
	number = {10},
	title = {Disclination dipole-dislocation interaction in the interface of an elastically heterogeneous bilayer},
	url = {https://doi.org/10.1177/10812865241304681},
	volume = {30},
	year = {2025}}

@article{klem,
	author = {Kleman, M. and Friedel, J.},
	doi = {10.1103/RevModPhys.80.61},
	issue = {1},
	journal = {Rev. Mod. Phys.},
	month = {Jan},
	numpages = {0},
	pages = {61--115},
	publisher = {American Physical Society},
	title = {Disclinations, dislocations, and continuous defects: A reappraisal},
	url = {https://link.aps.org/doi/10.1103/RevModPhys.80.61},
	volume = {80},
	year = {2008}}

@article{met10111517,
	article-number = {1517},
	author = {Fressengeas, Claude and Taupin, Vincent},
	doi = {10.3390/met10111517},
	issn = {2075-4701},
	journal = {Metals},
	number = {11},
	title = {Revisiting the Application of Field Dislocation and Disclination Mechanics to Grain Boundaries},
	url = {https://www.mdpi.com/2075-4701/10/11/1517},
	volume = {10},
	year = {2020}}

@article{murayama02,
	author = {M. Murayama and J. M. Howe and H. Hidaka and S. Takaki},
	doi = {10.1126/science.1067430},
	eprint = {https://www.science.org/doi/pdf/10.1126/science.1067430},
	journal = {Science},
	number = {5564},
	pages = {2433-2435},
	title = {Atomic-Level Observation of Disclination Dipoles in Mechanically Milled, Nanocrystalline Fe},
	url = {https://www.science.org/doi/abs/10.1126/science.1067430},
	volume = {295},
	year = {2002}}

@article{TOKUZUMI2020100716,
	author = {Tsubasa Tokuzumi and Shigeto Yamasaki and Wansong Li and Masatoshi Mitsuhara and Hideharu Nakashima},
	doi = {https://doi.org/10.1016/j.mtla.2020.100716},
	issn = {2589-1529},
	journal = {Materialia},
	pages = {100716},
	title = {Morphological and crystallographic features of kink bands in long-period stacking ordered Mg-Zn-Y alloy analyzed by serial sectioning SEM-EBSD observation method},
	url = {https://www.sciencedirect.com/science/article/pii/S2589152920301332},
	volume = {12},
	year = {2020}}

@article{ILTH17,
	author = {Inamura, T. and Li, M. and Tahara, M. and Hosoda, H.},
	journal = {Acta Materialia},
	pages = {351-359},
	title = {Formation process of the incompatible martensite microstructure in a beta-titanium shape memory alloy},
	volume = {124},
	year = {2017}}

@article{I19,
	author = {Inamura, T.},
	journal = {Acta Materialia},
	pages = {270-280},
	title = {Geometry of kink microstructure analysed by rank-1 connection},
	volume = {173},
	year = {2019}}

@article{BlassMorandotti17,
	author = {Blass, T. and Morandotti, M.},
	journal = {Journal of Convex Analysis},
	number = {2},
	pages = {547--570},
	title = {Renormalized energy and {P}each-{K}{\"o}hler forces for screw dislocations with antiplane shear},
	volume = {24},
	year = {2017}}

@article{SN88,
	author = {Seung, H. S. and Nelson, D. R.},
	journal = {Physical Review A},
	pages = {1005--1018},
	title = {Defects in flexible membranes with crystalline order},
	volume = {38},
	year = {1988}}

@article{RV1983,
	author = {A. E. Romanov and V. I. Vladimirov},
	journal = {physica status solidi (a)},
	pages = {11-34},
	title = {Disclinations in solids},
	volume = {78},
	year = {1983}}

@article{TOKUZUMI2023118785,
	author = {Tsubasa Tokuzumi and Masatoshi Mitsuhara and Shigeto Yamasaki and Tomonari Inamura and Toshiyuki Fujii and Hideharu Nakashima},
	issn = {1359-6454},
	journal = {Acta Materialia},
	pages = {118785},
	title = {Role of disclinations around kink bands on deformation behavior in {M}g--{Z}n--{Y} alloys with a long-period stacking ordered phase},
	volume = {248},
	year = {2023}}

@article{Taylor1934,
	author = {G. I. Taylor},
	journal = {Proceedings of the Royal Society of London. Series A},
	pages = {362-387},
	title = {The {M}echanism of {P}lastic {D}eformation of {C}rystals. {P}art {I}. {T}heoretical},
	volume = {145},
	year = {1934}}

@article{DU2025176,
	author = {Chunfeng Du and Yipeng Gao and Yizhen Li and Quan Li and Min Zha and Cheng Wang and Hailong Jia and Hui-Yuan Wang},
	doi = {https://doi.org/10.1016/j.jmst.2024.04.059},
	issn = {1005-0302},
	journal = {Journal of Materials Science and Technology},
	pages = {176-188},
	title = {A theoretical and experimental study of deformation mechanism dictated by disclination-dislocation coupling in Mg alloys at different temperatures},
	volume = {208},
	year = {2025}}

@article{ACHARYA01,
	author = {Amit Acharya},
	doi = {https://doi.org/10.1016/S0022-5096(00)00060-0},
	issn = {0022-5096},
	journal = {Journal of the Mechanics and Physics of Solids},
	number = {4},
	pages = {761-784},
	title = {A model of crystal plasticity based on the theory of continuously distributed dislocations},
	url = {https://www.sciencedirect.com/science/article/pii/S0022509600000600},
	volume = {49},
	year = {2001}}

@article{Nakai1978,
	author = {Nakai, Mitsuru and Sario, Leo},
	journal = {J. Austral. Math. Soc.},
	pages = {175-181},
	title = {Green's function of the clamped punctured disk},
	volume = {\textbf{20} (Series B)},
	year = {1978}}

@article{CDLM24,
	author = {Cesana, P. and De Luca, L. and Morandotti, M.},
	date = {2024/02/29},
	doi = {10.1137/22M1523443},
	isbn = {0036-1410},
	journal = {SIAM J. on Math. Anal.},
	journal1 = {SIAM Journal on Mathematical Analysis},
	journal2 = {SIAM J. Math. Anal.},
	n2 = {Abstract. We present a variational theory for lattice defects of rotational and translational type. We focus on finite systems of planar wedge disclinations, disclination dipoles, and edge dislocations, which we model as the solutions to minimum problems for isotropic elastic energies under the constraint of kinematic incompatibility. Operating under the assumption of planar linearized kinematics, we formulate the mechanical equilibrium problem in terms of the Airy stress function, for which we introduce a rigorous analytical formulation in the context of incompatible elasticity. Our main result entails the analysis of the energetic equivalence of systems of disclination dipoles and edge dislocations in the asymptotics of their singular limit regimes. By adopting the regularization approach via core radius, we show that, as the core radius vanishes, the asymptotic energy expansion for disclination dipoles coincides with the energy of finite systems of edge dislocations. This proves that Eshelby?s kinematic characterization of an edge dislocation in terms of a disclination dipole is exact also from the energetic standpoint.},
	pages = {79--136},
	publisher = {Society for Industrial and Applied Mathematics},
	title = {Semidiscrete {m}odeling of {S}ystems of {W}edge {D}isclinations and {E}dge {D}islocations via the {Airy} {S}tress {F}unction {M}ethod},
	type = {doi: 10.1137/22M1523443},
	volume = {56},
	year = {2024},
	year1 = {2024}}

@book{W68-BIS,
	author = {de Wit, R.},
	booktitle = {J. A. Simmons, R. de Wit, and R. Bullough (eds.) Fundamental Aspects of Dislocation Theory.},
	pages = {651-673},
	publisher = {Nat. Bur. Stand. (US), Spec. Publ. 317},
	title = {Linear {T}heory of {S}tatic {D}isclinations. Vol. $\textrm{I}$},
	year = {1970}}

@article{ZHANG18,
	author = {C. Zhang and A. Acharya and others},
	doi = {https://doi.org/10.1016/j.jmps.2018.02.004},
	issn = {0022-5096},
	journal = {J. of the Mech. and Phys. of Solids},
	pages = {258-302},
	title = {Finite element approximation of the fields of bulk and interfacial line defects},
	volume = {114},
	year = {2018}}

@article{HudsonMorandotti2017,
	author = {Hudson, Thomas and Morandotti, Marco},
	doi = {10.1137/17M1119974},
	fjournal = {SIAM Journal on Applied Mathematics},
	issn = {0036-1399,1095-712X},
	journal = {SIAM J. Appl. Math.},
	mrclass = {74H10 (37N15 74H05 82D25)},
	mrnumber = {3704273},
	mrreviewer = {Karsten\ Matthies},
	number = {5},
	pages = {1678--1705},
	title = {Properties of screw dislocation dynamics: time estimates on boundary and interior collisions},
	url = {https://doi.org/10.1137/17M1119974},
	volume = {77},
	year = {2017}}

@book{BBH1994,
	author = {Bethuel, Fabrice and Brezis, Ha\"im and H\'elein, Fr\'ed\'eric},
	doi = {10.1007/978-1-4612-0287-5},
	isbn = {0-8176-3723-0},
	mrclass = {58E20 (35Q55 49-02 58E50 82D50)},
	mrnumber = {1269538},
	pages = {xxviii+159},
	publisher = {Birkh\"auser Boston, Inc., Boston, MA},
	series = {Progress in Nonlinear Differential Equations and their Applications},
	title = {Ginzburg-{L}andau vortices},
	url = {https://doi.org/10.1007/978-1-4612-0287-5},
	volume = {13},
	year = {1994}}

@article{CGMP2025,
	author = {Cesana, Pierluigi and Grillo, Alfio and Morandotti, Marco and Pastore, Andrea},
	doi = {10.1137/24M1688096},
	fjournal = {SIAM Journal on Applied Mathematics},
	issn = {0036-1399,1095-712X},
	journal = {SIAM J. Appl. Math.},
	mrclass = {70F40 (34A60 49J10 74B99)},
	mrnumber = {4929992},
	number = {4},
	pages = {1361--1386},
	title = {Dissipative dynamics of {V}olterra disclinations},
	url = {https://doi-org.ezproxy.biblio.polito.it/10.1137/24M1688096},
	volume = {85},
	year = {2025}}

@article{ZA2018,
	author = {C. Zhang and A. Acharya},
	doi = {https://doi.org/10.1016/j.jmps.2018.06.020},
	issn = {0022-5096},
	journal = {Journal of the Mechanics and Physics of Solids},
	pages = {188-223},
	title = {On the relevance of generalized disclinations in defect mechanics},
	url = {https://www.sciencedirect.com/science/article/pii/S002250961830317X},
	volume = {119},
	year = {2018}}

@article{Polanyi1934,
	author = {M. Polanyi},
	journal = {Z. Physik},
	pages = {660-664},
	title = {{\"{U}}ber eine {A}rt {G}itterst\"{o}rung, die einen {K}ristall plastisch machen k\"{o}nnte},
	url = {https://doi.org/10.1007/BF01341481},
	volume = {89},
	year = {1934}}

@article{Orowan1934,
	author = {E. Orowan},
	journal = {Z. Physik},
	pages = {634-659},
	title = {Zur {K}ristallplastizit{\"a}t. {III}},
	url = {https://doi.org/10.1007/BF01341480},
	volume = {89},
	year = {1934}}

@article{HAGIHARA10,
	author = {K. Hagihara and N. Yokotani and Y. Umakoshi},
	doi = {https://doi.org/10.1016/j.intermet.2009.07.014},
	issn = {0966-9795},
	journal = {Intermetallics},
	number = {2},
	pages = {267-276},
	title = {Plastic deformation behavior of {M}g12{YZ}n with 18{R} long-period stacking ordered structure},
	url = {https://www.sciencedirect.com/science/article/pii/S0966979509002052},
	volume = {18},
	year = {2010}}

@article{Banhart11,
	author = {Banhart, F. and Kotakoski, J. and Krasheninnikov, A. V.},
	doi = {10.1021/nn102598m},
	eprint = {https://doi.org/10.1021/nn102598m},
	journal = {ACS Nano},
	note = {PMID: 21090760},
	number = {1},
	pages = {26-41},
	title = {Structural Defects in Graphene},
	url = {https://doi.org/10.1021/nn102598m},
	volume = {5},
	year = {2011}}

@article{IS20,
	author = {Inamura, T. and Shinohara, T.},
	journal = {Materials Transactions},
	pages = {870--874},
	title = {Rank-1 Connection of Kink Bands Formed by Non-Parallel Shears},
	volume = {61},
	year = {2020}}

@article{LAZAR05,
	author = {Markus Lazar and G{\'e}rard A. Maugin},
	doi = {https://doi.org/10.1016/j.ijengsci.2005.01.006},
	issn = {0020-7225},
	journal = {International Journal of Engineering Science},
	number = {13},
	pages = {1157-1184},
	title = {Nonsingular stress and strain fields of dislocations and disclinations in first strain gradient elasticity},
	url = {https://www.sciencedirect.com/science/article/pii/S0020722505000881},
	volume = {43},
	year = {2005}}

@article{Eshelby66,
	author = {J. D. Eshelby},
	doi = {10.1088/0508-3443/17/9/303},
	journal = {British Journal of Applied Physics},
	month = {sep},
	number = {9},
	pages = {1131--1135},
	publisher = {{IOP} Publishing},
	title = {A simple derivation of the elastic field of an edge dislocation},
	url = {https://doi.org/10.1088/0508-3443/17/9/303},
	volume = {17},
	year = 1966}

@inproceedings{acharya15,
	address = {Cham},
	author = {Acharya, A. and Fressengeas, C.},
	booktitle = {Differential Geometry and Continuum Mechanics},
	editor = {Chen, Gui-Qiang G. and Grinfeld, Michael and Knops, R. J.},
	isbn = {978-3-319-18573-6},
	pages = {123--165},
	publisher = {Springer International Publishing},
	title = {Continuum Mechanics of the Interaction of Phase Boundaries and Dislocations in Solids},
	year = {2015}}

@article{CH20,
	author = {Cesana, P. and Hambly, B.},
	journal = {Journal of Applied Probability},
	title = {A probabilistic model for interfaces in a martensitic phase transition},
	year = {2022}}

@article{BCH15,
	author = {Ball, J. and Cesana, P. and Hambly, B.},
	journal = {MATEC Web of Conferences},
	title = {A probabilistic model for martensitic avalanches},
	volume = {33},
	year = {2015}}

@article{IHM13,
	author = {Inamura, T. and Hosoda, H. and Miyazaki, S.},
	journal = {Philosophical Magazine},
	number = {6},
	pages = {618--634},
	title = {Incompatibility and preferred morphology in the self-accommodation microstructure of $\beta$-titanium shape memory alloy},
	volume = {93},
	year = {2013}}

@article{LN15,
	author = {Lei, X. W. and Nakatani, A.},
	journal = {Journal of Applied Mechanics},
	pages = {071016, 1--6},
	title = {A Deformation Mechanism for Ridge-Shaped Kink Structure in Layered Solids},
	volume = {82},
	year = {2015}}

@article{dW3,
	author = {de Wit, R.},
	journal = {Journal of Research of the National Bureau of Standards - A Physics and Chemistry},
	number = {5},
	title = {Theory of Disclinations: $\textrm{IV}$. Straight Disclinations},
	volume = {77A},
	year = {1973}}

@book{N67,
	author = {Nabarro, F. R. N.},
	publisher = {Oxford: Clarendon Press},
	series = {International Series of Monographs on Physics},
	title = {Theory of crystal dislocations},
	year = {1967}}

@article{V07,
	author = {Volterra, V.},
	journal = {Annales scientifiques de l'{\'E}cole Normale Sup{\'e}rieure},
	pages = {401-517},
	title = {Sur l'{\'e}quilibre des corps {\'e}lastiques multiplement connexes},
	volume = {24},
	year = {1907}}

@article{Acharya10,
	author = {Acharya, A.},
	journal = {Journal of the Mechanics and Physics of Solids},
	pages = {766--778},
	title = {New inroads in an old subject: plasticity, from around the atomic to the macroscopic scale},
	volume = {58(5)},
	year = {2010}}

@article{AlicandroDeLucaGarroniPonsiglione16,
	author = {Alicandro, R. and De Luca, L. and Garroni, A. and Ponsiglione, M.},
	journal = {J. Mech. Phys. Solids},
	pages = {87--104},
	title = {Dynamics of discrete screw dislocations on glide directions},
	volume = {92},
	year = {2016}}

@article{BlassFonsecaLeoniMorandotti15,
	author = {Blass, T. and Fonseca, I. and Leoni, G. and Morandotti, M.},
	journal = {SIAM Journal on Applied Mathematics},
	pages = {393--419},
	title = {Dynamics for systems of screw dislocations},
	volume = {75(2)},
	year = {2015}}

@article{CermelliGurtin99,
	author = {Cermelli, P. and Gurtin, M. E.},
	journal = {Archive for Rational Mechanics and Analysis},
	pages = {3--52},
	title = {The motion of screw dislocations in crystalline materials undergoing antiplane shear: glide, cross-slip, fine cross-slip},
	volume = {148(1)},
	year = {1999}}

@article{CermelliLeoni06,
	author = {Cermelli, P. and Leoni, G.},
	journal = {SIAM Journal on Mathematical Analysis},
	number = {4},
	pages = {1131--1160},
	publisher = {[Philadelphia] Society for Industrial and Applied Mathematics.},
	title = {Renormalized energy and forces on dislocations},
	volume = {37},
	year = {2005}}

@book{HirthLothe82,
	author = {Hirth, J. P. and Lothe, J.},
	publisher = {John Wiley \& Sons, New York},
	title = {Theory of Dislocations},
	year = {1982}}

@article{CFM2025,
	author = {Pierluigi Cesana and Edoardo Fabbrini and Marco Morandotti},
	doi = {10.1007/s10659-025-10161-5},
	issn = {1573-2681},
	journal = {Journal of Elasticity},
	number = {4},
	pages = {71},
	title = {Variational Formulation of Planar Linearized Elasticity with Incompatible Kinematics},
	url = {https://doi.org/10.1007/s10659-025-10161-5},
	volume = {157},
	year = {2025}}

\end{document}